\newtheorem{theorem}{Theorem}[section]
\newtheorem{prop}[theorem]{Proposition}
\theoremstyle{definition}
\newtheorem{defn}[theorem]{Definition}
\newtheorem{lemma}[theorem]{Lemma}
\newtheorem{coro}[theorem]{Corollary}
\newtheorem{prop-def}{Proposition-Definition}[section]
\newtheorem{coro-def}{Corollary-Definition}[section]
\newtheorem{remark}[theorem]{Remark}
\newtheorem{exam}[theorem]{Example}
\newcommand{\nc}{\newcommand}
\nc{\tred}[1]{\textcolor{red}{#1}}
\nc{\tblue}[1]{\textcolor{blue}{#1}}
\nc{\tgreen}[1]{\textcolor{green}{#1}}
\nc{\tpurple}[1]{\textcolor{purple}{#1}}
\nc{\btred}[1]{\textcolor{red}{\bf #1}}
\nc{\btblue}[1]{\textcolor{blue}{\bf #1}}
\nc{\btgreen}[1]{\textcolor{green}{\bf #1}}
\nc{\btpurple}[1]{\textcolor{purple}{\bf #1}}
\nc{\NN}{{\mathbb N}}
\nc{\ncsha}{{\mbox{\cyr X}^{\mathrm NC}}} \nc{\ncshao}{{\mbox{\cyrX}^{\mathrm NC}_0}}
\newcommand{\efootnote}[1]{}
\renewcommand{\textbf}[1]{}
\newcommand{\delete}[1]{}
\nc{\mlabel}[1]{\label{#1}}  
\nc{\mcite}[1]{\cite{#1}}  
\nc{\mref}[1]{\ref{#1}}  
\nc{\mbibitem}[1]{\bibitem{#1}} 
\nc{\mlabel}[1]{\label{#1}{\hfill \hspace{1cm}{\bf{{\ }\hfill(#1)}}}}
\nc{\mcite}[1]{\cite{#1}{{\bf{{\ }(#1)}}}}  
\nc{\mref}[1]{\ref{#1}{{\bf{{\ }(#1)}}}}  
\nc{\mbibitem}[1]{\bibitem[\bf #1]{#1}} 
\newcommand{\tun}{\begin{picture}(5,0)(-2,-1)
\put(0,0){\circle*{2}}
\end{picture}}
\newcommand{\tdeux}{\begin{picture}(7,7)(0,-1)
\put(3,0){\circle*{2}}
\put(3,0){\line(0,1){5}}
\put(3,5){\circle*{2}}
\end{picture}}
\newcommand{\ttroisun}{\begin{picture}(15,8)(-5,-1)
\put(3,0){\circle*{2}}
\put(-0.65,0){$\vee$}
\put(6,7){\circle*{2}}
\put(0,7){\circle*{2}}
\end{picture}}
\newcommand{\tquatredeux}{\begin{picture}(15,18)(-5,-1)
\put(3,0){\circle*{2}}
\put(-0.65,0){$\vee$}
\put(6,7){\circle*{2}}
\put(0,7){\circle*{2}}
\put(0,14){\circle*{2}}
\put(0,7){\line(0,1){7}}
\end{picture}}
\newcommand{\tdun}[1]{\begin{picture}(10,5)(-2,-1)
\put(0,0){\circle*{2}}
\put(3,-2){\tiny #1}
\end{picture}}
\newcommand{\tduns}[1]{\begin{picture}(10,5)(-2,-1)
\put(0,0){\circle*{2}}
\put(3,-2){\tiny $\sigmaup$}
\end{picture}}
\newcommand{\tddeuxs}[2]{\begin{picture}(12,5)(0,-1)
\put(3,0){\circle*{2}}
\put(3,0){\line(0,1){5}}
\put(3,5){\circle*{2}}
\put(6,-2){\tiny $\sigmaup$}
\put(6,3){\tiny $\sigmaup$}
\end{picture}}
\newcommand{\tddeux}[2]{\begin{picture}(12,5)(0,-1)
\put(3,0){\circle*{2}}
\put(3,0){\line(0,1){5}}
\put(3,5){\circle*{2}}
\put(6,-2){\tiny $\sigmaup$}
\put(6,3){\tiny #2}
\end{picture}}
\newcommand{\tddeuxx}[2]{\begin{picture}(12,5)(0,-1)
\put(3,0){\circle*{2}}
\put(3,0){\line(0,1){5}}
\put(3,5){\circle*{2}}
\put(6,-2){\tiny #1}
\put(6,3){\tiny #2}
\end{picture}}
\newcommand{\tdtroisuns}[3]{\begin{picture}(20,12)(-5,-1)
\put(3,0){\circle*{2}}
\put(-0.65,0){$\vee$}
\put(6,7){\circle*{2}}
\put(0,7){\circle*{2}}
\put(5,-2){\tiny $\sigmaup$}
\put(9,5){\tiny $\sigmaup$}
\put(-5,5){\tiny $\sigmaup$}
\end{picture}}
\newcommand{\tdtroisun}[3]{\begin{picture}(20,12)(-5,-1)
\put(3,0){\circle*{2}}
\put(-0.65,0){$\vee$}
\put(6,7){\circle*{2}}
\put(0,7){\circle*{2}}
\put(5,-2){\tiny $\sigmaup$}
\put(9,5){\tiny #2}
\put(-5,5){\tiny #3}
\end{picture}}
\newcommand{\tdtroisunx}[3]{\begin{picture}(20,12)(-5,-1)
\put(3,0){\circle*{2}}
\put(-0.65,0){$\vee$}
\put(6,7){\circle*{2}}
\put(0,7){\circle*{2}}
\put(5,-2){\tiny #1}
\put(9,5){\tiny #2}
\put(-5,5){\tiny #3}
\end{picture}}
\newcommand{\tdtroisdeux}[3]{\begin{picture}(12,12)(-2,-1)
\put(0,0){\circle*{2}}
\put(0,0){\line(0,1){5}}
\put(0,5){\circle*{2}}
\put(0,5){\line(0,1){5}}
\put(0,10){\circle*{2}}
\put(3,-2){\tiny $\sigmaup$}
\put(3,3){\tiny #2}
\put(3,9){\tiny #3}
\end{picture}}
\newcommand{\tdquatredeux}[4]{\begin{picture}(20,20)(-5,-1)
\put(3,0){\circle*{2}}
\put(-.65,0){$\vee$}
\put(6,7){\circle*{2}}
\put(0,7){\circle*{2}}
\put(0,14){\circle*{2}}
\put(0,7){\line(0,1){7}}
\put(5,-2){\tiny $\sigmaup$}
\put(9,5){\tiny #2}
\put(-5,5){\tiny $\sigmaup$}
\put(-5,12){\tiny #4}
\end{picture}}
\newcommand{\tdquatredeuxx}[4]{\begin{picture}(20,20)(-5,-1)
\put(3,0){\circle*{2}}
\put(-.65,0){$\vee$}
\put(6,7){\circle*{2}}
\put(0,7){\circle*{2}}
\put(0,14){\circle*{2}}
\put(0,7){\line(0,1){7}}
\put(5,-2){\tiny #1}
\put(9,5){\tiny #2}
\put(-5,5){\tiny #3}
\put(-5,12){\tiny #4}
\end{picture}}
\newcommand{\tdquatretrois}[4]{\begin{picture}(20,20)(-5,-1)
\put(3,0){\circle*{2}}
\put(-.65,0){$\vee$}
\put(6,7){\circle*{2}}
\put(0,7){\circle*{2}}
\put(6,14){\circle*{2}}
\put(6,7){\line(0,1){7}}
\put(5,-2){\tiny $\sigmaup$}
\put(9,5){\tiny #2}
\put(-5,5){\tiny #4}
\put(9,12){\tiny #3}
\end{picture}}
\newcommand{\tdquatrequatre}[4]{\begin{picture}(20,14)(-5,-1)
\put(3,5){\circle*{2}}
\put(-.65,5){$\vee$}
\put(6,12){\circle*{2}}
\put(0,12){\circle*{2}}
\put(3,0){\circle*{2}}
\put(3,0){\line(0,1){5}}
\put(6,-3){\tiny $\sigmaup$}
\put(6,4){\tiny #2}
\put(9,12){\tiny #3}
\put(-5,12){\tiny #4}
\end{picture}}
\newcommand{\tdquatrequatrex}[4]{\begin{picture}(20,14)(-5,-1)
\put(3,5){\circle*{2}}
\put(-.65,5){$\vee$}
\put(6,12){\circle*{2}}
\put(0,12){\circle*{2}}
\put(3,0){\circle*{2}}
\put(3,0){\line(0,1){5}}
\put(6,-3){\tiny #1}
\put(6,4){\tiny #2}
\put(9,12){\tiny #3}
\put(-5,12){\tiny #4}
\end{picture}}
\nc{\opa}{\ast} \nc{\opb}{\odot} \nc{\op}{\bullet} \nc{\pa}{\frakL}
\nc{\arr}{\rightarrow} \nc{\lu}[1]{(#1)} \nc{\mult}{\mrm{mult}}
\nc{\diff}{\mathfrak{Diff}}
\nc{\opc}{\sharp}\nc{\opd}{\natural}
\nc{\ope}{\circ}
\nc{\dpt}{\mathrm{d}}
\nc{\hck}{H_{RT}}
\nc{\vdf}{\calf}
\nc{\ldf}{\calf_\ell}
\nc{\hlf}{H_\ell}
\nc{\onek}{\mathbf{1}_\bfk}
\nc{\diam}{alternating\xspace}
\nc{\Diam}{Alternating\xspace}
\nc{\cdiam}{canonical alternating\xspace}
\nc{\Cdiam}{Canonical alternating\xspace}
\nc{\AW}{\mathcal{A}}
\nc{\ari}{\mathrm{ar}}
\nc{\lef}{\mathrm{lef}}
\nc{\Sh}{\mathrm{ST}}
\nc{\Cr}{\mathrm{Cr}}
\nc{\st}{{Schr\"oder tree}\xspace}
\nc{\sts}{{Schr\"oder trees}\xspace}
\nc{\vertset}{\Omega} 
\nc{\pb}{{\mathrm{pb}}}
\nc{\Lf}{{\mathrm{Lf}}}
\nc{\lft}{{left tree}\xspace}
\nc{\lfts}{{left trees}\xspace}
\nc{\fat}{{fundamental averaging tree}\xspace}
\nc{\fats}{{fundamental averaging trees}\xspace}
\nc{\avt}{\mathrm{Avt}}
\nc{\rass}{{\mathit{RAss}}}
\nc{\aass}{{\mathit{AAss}}}
\nc{\vin}{{\mathrm Vin}}    
\nc{\lin}{{\mathrm Lin}}    
\nc{\inv}{\mathrm{I}n}
\nc{\gensp}{V} 
\nc{\genbas}{\mathcal{V}} 
\nc{\bvp}{V_P}     
\nc{\gop}{{\,\omega\,}}     
\nc{\bin}[2]{ (_{\stackrel{\scs{#1}}{\scs{#2}}})}  
\nc{\binc}[2]{ \left (\!\! \begin{array}{c} \scs{#1}\\
    \scs{#2} \end{array}\!\! \right )}  
\nc{\bincc}[2]{  \left ( {\scs{#1} \atop
    \vspace{-1cm}\scs{#2}} \right )}  
\nc{\bs}{\bar{S}} \nc{\cosum}{\sqsubset} \nc{\la}{\longrightarrow}
\nc{\rar}{\rightarrow} \nc{\dar}{\downarrow} \nc{\dprod}{**}
\nc{\dap}[1]{\downarrow \rlap{$\scriptstyle{#1}$}}
\nc{\md}{\mathrm{dth}} \nc{\uap}[1]{\uparrow
\rlap{$\scriptstyle{#1}$}} \nc{\defeq}{\stackrel{\rm def}{=}}
\nc{\disp}[1]{\displaystyle{#1}} \nc{\dotcup}{\
\displaystyle{\bigcup^\bullet}\ } \nc{\gzeta}{\bar{\zeta}}
\nc{\hcm}{\ \hat{,}\ } \nc{\hts}{\hat{\otimes}}
\nc{\barot}{{\otimes}} \nc{\free}[1]{\bar{#1}}
\nc{\uni}[1]{\tilde{#1}} \nc{\hcirc}{\hat{\circ}} \nc{\lleft}{[}
\nc{\lright}{]} \nc{\lc}{\lfloor} \nc{\rc}{\rfloor}
\nc{\curlyl}{\left \{ \begin{array}{c} {} \\ {} \end{array}
    \right .  \!\!\!\!\!\!\!}
\nc{\curlyr}{ \!\!\!\!\!\!\!
    \left . \begin{array}{c} {} \\ {} \end{array}
    \right \} }
\nc{\longmid}{\left | \begin{array}{c} {} \\ {} \end{array}
    \right . \!\!\!\!\!\!\!}
\nc{\onetree}{\bullet} \nc{\ora}[1]{\stackrel{#1}{\rar}}
\nc{\ola}[1]{\stackrel{#1}{\la}}
\nc{\ot}{\otimes} \nc{\mot}{{{\boxtimes\,}}}
\nc{\otm}{\overline{\boxtimes}} \nc{\sprod}{\bullet}
\nc{\scs}[1]{\scriptstyle{#1}} \nc{\mrm}[1]{{\rm #1}}
\nc{\margin}[1]{\marginpar{\rm #1}}   
\nc{\dirlim}{\displaystyle{\lim_{\longrightarrow}}\,}
\nc{\invlim}{\displaystyle{\lim_{\longleftarrow}}\,}
\nc{\mvp}{\vspace{0.3cm}} \nc{\tk}{^{(k)}} \nc{\tp}{^\prime}
\nc{\ttp}{^{\prime\prime}} \nc{\svp}{\vspace{2cm}}
\nc{\vp}{\vspace{8cm}} \nc{\proofbegin}{\noindent{\bf Proof: }}
\nc{\proofend}{$\blacksquare$ \vspace{0.3cm}}
\nc{\modg}[1]{\!<\!\!{#1}\!\!>}
\nc{\intg}[1]{F_C(#1)} \nc{\lmodg}{\!
<\!\!} \nc{\rmodg}{\!\!>\!}
\nc{\cpi}{\widehat{\Pi}}
\nc{\sha}{{\mbox{\cyr X}}}  
\nc{\shap}{{\mbox{\cyrs X}}} 
\nc{\shpr}{\diamond}    
\nc{\shp}{\ast} \nc{\shplus}{\shpr^+}
\nc{\shprc}{\shpr_c}    
\nc{\msh}{\ast} \nc{\zprod}{m_0} \nc{\oprod}{m_1}
\nc{\vep}{\varepsilon} \nc{\labs}{\mid\!} \nc{\rabs}{\!\mid}
\nc{\sqmon}[1]{\langle #1\rangle}
\nc{\mmbox}[1]{\mbox{\ #1\ }} \nc{\dep}{\mrm{dep}} \nc{\fp}{\mrm{FP}}
\nc{\rchar}{\mrm{char}} \nc{\End}{\mrm{End}} \nc{\Fil}{\mrm{Fil}}
\nc{\Mor}{Mor\xspace} \nc{\gmzvs}{gMZV\xspace}
\nc{\gmzv}{gMZV\xspace} \nc{\mzv}{MZV\xspace}
\nc{\mzvs}{MZVs\xspace} \nc{\Hom}{\mrm{Hom}} \nc{\id}{\mrm{id}}
\nc{\im}{\mrm{im}} \nc{\incl}{\mrm{incl}} \nc{\map}{\mrm{Map}}
\nc{\mchar}{\rm char} \nc{\nz}{\rm NZ} \nc{\supp}{\mathrm Supp}
\nc{\Alg}{\mathbf{Alg}} \nc{\Bax}{\mathbf{Bax}} \nc{\bff}{\mathbf f}
\nc{\bfk}{{\bf k}} \nc{\bfone}{{\bf 1}} \nc{\bfx}{\mathbf x}
\nc{\bfy}{\mathbf y}
\nc{\base}[1]{\bfone^{\otimes ({#1}+1)}} 
\nc{\Cat}{\mathbf{Cat}}
\nc{\detail}{\marginpar{\bf More detail}
    \noindent{\bf Need more detail!}
    \svp}
\nc{\Int}{\mathbf{Int}} \nc{\Mon}{\mathbf{Mon}}
\nc{\rbtm}{{shuffle }} \nc{\rbto}{{Rota-Baxter }}
\nc{\remarks}{\noindent{\bf Remarks: }} \nc{\Rings}{\mathbf{Rings}}
\nc{\Sets}{\mathbf{Sets}} \nc{\wtot}{\widetilde{\odot}}
\nc{\wast}{\widetilde{\ast}} \nc{\bodot}{\bar{\odot}}
\nc{\bast}{\bar{\ast}} \nc{\hodot}[1]{\odot^{#1}}
\nc{\hast}[1]{\ast^{#1}} \nc{\mal}{\mathcal{O}}
\nc{\tet}{\tilde{\ast}} \nc{\teot}{\tilde{\odot}}
\nc{\oex}{\overline{x}} \nc{\oey}{\overline{y}}
\nc{\oez}{\overline{z}} \nc{\oef}{\overline{f}}
\nc{\oea}{\overline{a}} \nc{\oeb}{\overline{b}}
\nc{\weast}[1]{\widetilde{\ast}^{#1}}
\nc{\weodot}[1]{\widetilde{\odot}^{#1}} \nc{\hstar}[1]{\star^{#1}}
\nc{\lae}{\langle} \nc{\rae}{\rangle}
\nc{\lf}{\lfloor}
\nc{\rf}{\rfloor}
\nc{\QQ}{{\mathbb Q}}
\nc{\RR}{{\mathbb R}} \nc{\ZZ}{{\mathbb Z}}
\nc{\cala}{{\mathcal A}} \nc{\calb}{{\mathcal B}}
\nc{\calc}{{\mathcal C}}
\nc{\cald}{{\mathcal D}} \nc{\cale}{{\mathcal E}}
\nc{\calf}{{\mathcal F}} \nc{\calg}{{\mathcal G}}
\nc{\calh}{{\mathcal H}} \nc{\cali}{{\mathcal I}}
\nc{\call}{{\mathcal L}} \nc{\calm}{{\mathcal M}}
\nc{\caln}{{\mathcal N}} \nc{\calo}{{\mathcal O}}
\nc{\calp}{{\mathcal P}} \nc{\calr}{{\mathcal R}}
\nc{\cals}{{\mathcal S}} \nc{\calt}{{\mathcal T}}
\nc{\calu}{{\mathcal U}} \nc{\calw}{{\mathcal W}} \nc{\calk}{{\mathcal K}}
\nc{\calx}{{\mathcal X}} \nc{\CA}{\mathcal{A}}
\nc{\fraka}{{\mathfrak a}} \nc{\frakA}{{\mathfrak A}}
\nc{\frakb}{{\mathfrak b}} \nc{\frakB}{{\mathfrak B}}
\nc{\frakD}{{\mathfrak D}} \nc{\frakF}{\mathfrak{F}}
\nc{\frakf}{{\mathfrak f}} \nc{\frakg}{{\mathfrak g}}
\nc{\frakH}{{\mathfrak H}} \nc{\frakL}{{\mathfrak L}}
\nc{\frakM}{{\mathfrak M}} \nc{\bfrakM}{\overline{\frakM}}
\nc{\frakm}{{\mathfrak m}} \nc{\frakP}{{\mathfrak P}}
\nc{\frakN}{{\mathfrak N}} \nc{\frakp}{{\mathfrak p}}
\nc{\frakS}{{\mathfrak S}} \nc{\frakT}{\mathfrak{T}}
\nc{\frakX}{{\mathfrak X}}
\nc{\BS}{\mathbb{S
}}
\font\cyr=wncyr10 \font\cyrs=wncyr7
\nc{\xing}[1]{\textcolor{red}{Xing:#1}}
\nc{\meng}[1]{\textcolor{blue}{xiaomeng: #1}}
\nc{\revise}[1]{\textcolor{red}{#1}}
\nc{\ID}{{\rm I}}\nc{\lbar}[1]{\overline{#1}}\nc{\bre}{{\rm bre}}
\nc{\sd}{\cals}\nc{\rb}{\rm RB}\nc{\A}{\rm A}\nc{\LL}{\rm L}\nc{\tx}{\tilde{X}}
\nc{\col}{\Delta_{\epsilon}}\nc{\mul}{m_{RT}}\nc{\ul}{u_{RT}}\nc{\epl}{\varepsilon_{RT}}
\nc{\hl}{H_{RT}}\nc{\arro}[1]{#1}\nc{\px}{P_{\tx}}\nc{\pw}{P_{\mathfrak{w}}}\nc{\pl}{B^+}
\nc{\pp}{\pl}\nc{\ppp}[1]{B^+(#1)}\nc{\dw}{\diamond_{\mathfrak{w}}}\nc{\dl}{\diamond_{\rm \ell}}
\nc{\ncshaw}{\sha^{{\rm NC}}_{\mathfrak{w}}}\nc{\ncshal}{\sha^{{\rm NC}}_{{\rm \ell}}}
\nc{\ver}{\rm V}\nc{\ld}{l}\nc{\del}{\Delta_{{\rm \ell}}}\nc{\epsl}{\varepsilon_{{\rm \ell}}}
\nc{\uul}{u_{{\rm \ell}}}\nc{\oneh}{\mathbf{1}}\nc{\onew}{\mathbf{1}}
\nc{\etree}{1} \nc{\conc}{m_{RT}} \nc{\medmid}{{\,~{\tiny \longmid}~\,}}
\nc{\leql}{\leq_{\text{l}}} \nc{\leqh}{\leq_{\text{h}}}
\nc{\leqhl}{\leq_{\text{h,l}}}  \nc{\lhl}{<_{\text{h,l}}}
\begin{document}

\title[Infinitesimal unitary Hopf algebras and planar rooted forests]{Infinitesimal unitary Hopf algebras and planar rooted forests}
%
\author{Xing Gao}
\address{School of Mathematics and Statistics, Key Laboratory of Applied Mathematics and Complex Systems, Lanzhou University, Lanzhou, Gansu 730000, P.\,R. China}
         \email{gaoxing@lzu.edu.cn}

\author{Xiaomeng Wang}
\address{School of Mathematics and Statistics, Lanzhou University, Lanzhou, Gansu 730000, P.\,R. China}
         \email{wangxm2015@lzu.edu.cn}

\date{\today}
\begin{abstract}
Infinitesimal bialgebras were introduced by Joni and Rota.
An infinitesimal bialgebra is at the same time an algebra and coalgebra, in such a way
that the comultiplication is a derivation. Twenty years after Joni and Rota, Aguiar introduced
the concept of an infinitesimal (non-unitary) Hopf algebra.
In this paper we study infinitesimal unitary bialgebras and infinitesimal unitary Hopf algebras, in contrary to Aguiar's approach.
Using an infinitesimal version of the Hochschild 1-cocycle condition, we prove respectively that a class of decorated planar rooted forests
is the free cocycle infinitesimal unitary bialgebra and free cocycle infinitesimal unitary Hopf algebra
on a set. As an application, we obtain that the planar rooted forests is the free cocycle infinitesimal unitary Hopf algebra on the empty set.
\end{abstract}

\subjclass[2010]{
16W99, 
16S10, 
16T05, 
08B20, 
16T10, 
16T30. 
}

\keywords{Decorated planar rooted trees, infinitesimal bialgebra, infinitesimal Hopf algebra, cocycle condition}

\maketitle

\tableofcontents

\setcounter{section}{0}

\allowdisplaybreaks

\section{Introduction}
The renormalization method is to deal with divergence in physics and mathematics~\mcite{CK1, FGK, GPZ, GPZ1, GZ,GZ1},
such as in Feynman integrals and multiple zeta values. The Connes-Kreimer Hopf algebra of rooted forests
was introduced in~\mcite{CK,Kr} as a baby model of the Hopf algebra of Feynman graphs to study the renormalization of perturbative
quantum field theory. Whereafter, this Hopf algebra was studied extensively as one of the main examples of Hopf algebras used for physics applications~\mcite{Fo0, Gal, Kr2, Mo, ZGG}. It is also related to the Loday-Ronco Hopf algebra~\cite{LR} and Grossman-Larson Hopf algebra~\cite{GL} of rooted trees.
Based on planar rooted forests, a non-commutative version $H_{P,\,R}$ of Connes-Kreimer Hopf algebra was introduced simultaneously in~\mcite{Fo1} and~\mcite{Hol}, satisfying a universal property~\mcite{Fo3}. This universal property was generalized in~\mcite{ZGG} to obtain more general free objects in terms of a class of decorated planar rooted forests.

The concept of algebras with (one or more) linear operators was introduced by A. G. Kurosh~\cite{Ku} by the name of $\Omega$-algebras.
In particular, an algebra with one linear operator is called an operated algebra in~\cite{Gop}, in which
the free operated algebra was also constructed. See also~\cite{BCQ, Gub, GSZ}.
In~\mcite{ZGG}, the authors treated the Hopf algebra of planar rooted forests under the viewpoint of operated algebra, where
the linear operator is the grafting operator $B^+$.
In this view, the universal property of the planar rooted forests characterized in~\cite[Theorem~3]{Fo3} can be rephrased as
the free operated algebra on the empty set.
Further, combined with a specific coproduct, a class of decorated planar rooted forests $H_\ell(\tx)$ gives the free objects in the category of Hopf algebras with a given Hochschild 1-cocycle~\mcite{ZGG}, named cocycle Hopf algebras---operated Hopf algebras satisfying a 1-cocycle condition.

Our aim in the present paper is to introduce an infinitesimal version of the Hopf algebra $H_\ell(\tx)$ constructed in~\mcite{ZGG}.
The concept of an infinitesimal bialgebra originated from Joni and Rota~\mcite{JR} in order to provide an algebraic framework for the calculus of divided differences. More precisely, an infinitesimal bialgebra is a triple $(A,m,\Delta)$ where $(A,m)$ is an associative algebra, $(A,\Delta)$ is a coassociative coalgebra
and for each $a,b\in A$,
\vskip-0.25in
\begin{equation}
\Delta(ab)=a\cdot \Delta(b) + \Delta(a) \cdot b =\sum_{(b)} ab_{(1)}\ot b_{(2)}+\sum_{(a)} a_{(1)}\ot a_{(2)}b.
\mlabel{eq:leibniz}
\end{equation}
\vskip-0.1in
\noindent A typical example is that the path algebra of an arbitrary quiver admits a canonical structure of an infinitesimal bialgebra~\mcite{Ag0}.
Later the basic theories of infinitesimal bialgebras~\mcite{Ag1} and infinitesimal Hopf algebras~\mcite{Ag0,Fo2} were developed,
where analogies with the theories of ordinary Hopf algebras and Lie bialgebras~\mcite{BGS,Dri} were found.

We achieve our aim in two steps. We first equip a new coproduct on a class of decorated planar rooted trees involving an analogy of the Hochschild 1-cocycle condition, which gives a recursive construction of the coproduct in the well-known Connes-Kreimer Hopf algebra of rooted forests~\mcite{Fo3}.
We then extend this new coproduct to a class of decorated planar rooted forests through Eq.~(\mref{eq:leibniz}).
Note that the multiplicative unit is an indispensable ingredient in such 1-cocycle conditions (see Eqs.~(\mref{eq:dbp}) and~(\mref{eq:1cocy}) below), and there is no non-zero infinitesimal bialgebra which is both unitary and counitary~\mcite{Ag0}.
So it is desirable to incorporate the unitary property into infinitesimal bialgebras,
and we propose the concepts of an infinitesimal unitary bialgebra and an infinitesimal unitary Hopf algebra here.
When a 1-cocycle condition is involved, we also propose the concepts of a cocycle infinitesimal unitary bialgebra and a cocycle infinitesimal unitary Hopf algebra.
It is also profitable to incorporate the unitary property into infinitesimal bialgebras to construct free objects.
Namely, we can construct respectively the free objects in the categories of cocycle infinitesimal unitary bialgebras and cocycle infinitesimal unitary Hopf algebras via a class of decorated planar rooted forests, whereas the constructions of free infinisemal bialgebras and free infinitesimal Hopf algebras are still not obtained up to now.

Here is the structure of the paper.
In Section~\mref{sec:CKHOPHAL}, after reviewing basics in planar rooted forests,
we give an infinitesimal version of the 1-cocycle condition (Eq.~(\mref{eq:dbp})), which has
a subtle difference with the usual 1-cocycle condition (Remark~\mref{re:coc}). Thanks to
this new 1-cocycle condition, we obtain a new coproduct $\col$ on a class of decorated planar rooted forests $\hlf(\tx)$,
which, together with the concatenation multiplication, turns the $\hlf(\tx)$ into an infinitesimal unitary bialgebra (Theorem~\mref{thm:rt2}).
To make the coproduct $\col$ more explicit, a combinatorial description of it is also given in Subsection~\mref{subs:combdes}.
Further, we propose the concept of a cocycle infinitesimal unitary bialgebra (Definition~\mref{de:defciub})
and prove that $\hlf(\tx)$ is the free cocycle infinitesimal unitary bialgebra on a set $X$ (Theorem~\mref{thm:propm}).
In Section~\mref{sec:hopf}, continuing the line in Section~\mref{sec:CKHOPHAL},
we start with the concepts of an infinitesimal unitary Hopf algebra (Definition~\mref{de:deha}) and a cocycle infinitesimal unitary Hopf algebra (Definition~\mref{defn:ciuha}). We show that $\hlf(\tx)$ is an infinitesimal unitary Hopf algebra (Theorem~\mref{thm:rt13}) and then the
free cocycle infinitesimal unitary Hopf algebra on a set $X$ (Theorem~\mref{thm:rt16}).
In particular, we obtain that the (undecorated) planar rooted forests is the free cocycle infinitesimal unitary Hopf algebra on the empty set (Corollary~\mref{coro:wdeh}).

{\bf Notation.} In this paper, we will be working over a unitary commutative base ring $\bfk$.
By an algebra we mean an associative algebra (possibly without unit)
and by an coalgebra we mean a coassociative coalgebra (possibly without counit), unless otherwise stated.
Linear maps and tensor products are taken over $\bfk$.
For an algebra $A$, we view $A\ot A$ as an $A$-bimodule via
\begin{equation}
a\cdot(b\otimes c):=ab\otimes c\,\text{ and }\, (b\otimes c)\cdot a:= b\otimes ca.
\mlabel{eq:dota}
\end{equation}

\section{Free cocycle infinitesimal unitary bialgebras of decorated planar rooted forests}
\label{sec:CKHOPHAL}
In this section, we first recall some basic notations used throughout the paper.
Then we show that a class of decorated planar rooted forests
is an infinitesimal unitary bialgebra and further a free cocycle infinitesimal unitary bialgebra.

\subsection{Decorated planar rooted forests}
We expose some concepts and notations on planar rooted forests from~\mcite{Gub,ZGG}.
Let $\calt$ denote the set of planar rooted trees and $S(\calt)$ the free semigroup generated by $\calt$ in which the multiplication is concatenation, denoted by $\mul$ and usually suppressed. Thus an element $F$ in $S(\calt)$, called a {\bf planar rooted forest}, is a noncommutative product of planar rooted trees in $\calt$. Adding to $S(\calt)$ the empty planar rooted tree $1$, we obtain the free monoid $\calf=M(\calt)$.

Next, we review some concepts and notations on decorated planar rooted forest.
Let $X$ be a set and let $\sigmaup$ a symbol not in the set $X$. Denote $\tx:=X\sqcup\{\sigmaup\}$.
For the set $\tx$, let $\calt(\tx)$ (resp. $\calf(\tx):=M(\calt(\tx)))$ denote the set of planar rooted trees (resp. forests) whose vertices,
consisted of leaves and internal vertices, are decorated by elements of $\tx$.

Let $\calt_\ell(\tx)$ (resp. $\ldf(\tx)$)
denote the subset of $\calt(\tx)$ (resp. $\vdf(\tx)$) consisting of (vertex) decorated planar rooted trees (resp. forests) where elements of $X$ decorate the leaves only. In other words, all internal vertices, as well as possibly some of the leaves, are decorated by $\sigmaup$.
Note that the empty tree 1 is in $\calt_\ell(\tx)$. If a tree has only one vertex, then the vertex is treated as a leaf.
Here are some examples in $\calt_\ell(\tx)$ where the root is on the bottom:
$$\tdun{$x$}\, ,\,\tduns{\tiny{\sigmaup}}\, ,\,\tddeux{\tiny{\sigmaup}}{$x$}\, ,\, \tdtroisun{\tiny{\sigmaup}}{$x$}{$\tiny{\sigmaup}$}
\, ,\, \tdquatredeux{\tiny{\sigmaup}}{$\tiny{\sigmaup}$}{\tiny{\sigmaup}}{$x$}\, ,\, \tdquatrequatre{$\tiny{\sigmaup}$}{$\tiny{\sigmaup}$}{$y$}{$x$},$$
whereas the following are some examples not in $\calt_\ell(\tx)$:
$$\tddeuxx{$x$}{$x$}\, ,\, \tdtroisunx{$x$}{$x$}{$\tiny{\sigmaup}$}
\, ,\, \tdquatredeuxx{$\tiny{\sigmaup}$}{$\tiny{\sigmaup}$}{$x$}{$x$}\, ,\, \tdquatrequatrex{$\tiny{\sigmaup}$}{$x$}{$y$}{$x$}.$$
Let $\hlf(\tx):=\bfk \ldf(\tx)=\bfk M(\calt_\ell(\tx))$
be the free $\bfk$-module spanned by $\ldf(\tx)$.
Denote by
$$B^{+}: \hlf(\tx)\rightarrow \hlf(\tx)$$ the grafting map sending $1$ to $\bullet_{\sigmaup}$ and sending a rooted forest in $\hlf(\tx)$
to its grafting with the new root decorated by $\sigmaup$,
and by $\conc$ the concatenation on $\hlf(\tx)$.
Then $\hlf(\tx)$ is closed under the concatenation $\mul$~\cite{ZGG}.
Here are some examples about $B^{+}$ on $\hlf(\tx) $:
\vskip -0.2in
\begin{align*}
B^{+}(1) =\tdun{$\tiny{\sigmaup}$},\, B^{+}(\tdun{$x$})=\tddeux{\tiny{\sigmaup}}{$x$}, \, B^{+}(\tddeux{\tiny{\sigmaup}}{$x$}\tdun{$x$})=\tdquatredeux{\tiny{\sigmaup}}{$x$}{\tiny{\sigmaup}}{$x$}.
\end{align*}

For $F=T_1\cdots T_{n}\in \ldf(\tx)$ with $n\geq 0$ and $T_1,\cdots,T_{n}\in \calt_\ell(\tx)$, we define $\bre(F):=n$
to be the {\bf breadth} of $F$. Here we use the convention that $\bre(1) = 0$.
To define the depth of a decorated planar rooted forests
$F\in \ldf(\tx)$, we give a recursive structure on $\calf_\ell(\tx)$.
Denote $\bullet_{X}:=\{\bullet_{x}\mid x\in X\}$ and set
\begin{equation*}
M_{0}:=M(\bullet_{X}) = S(\bullet_{X}) \sqcup\{1\}.
\end{equation*}
Here $M(\bullet_{X})$ (resp. $S(\bullet_{X})$) denotes the submonoid (resp. subsemigroup) of $\calf(\tx)$ generated by $\bullet_{X}$,
which is also isomorphic to the free monoid (resp. semigroup) generated by $\bullet_{X}$, justifying the abuse of notations.
Assume that $M_n, n\geq 0,$ has been defined, and define
\begin{equation}
M_{n+1}:=M(\bullet_{X}\sqcup \ppp{M_n}).
\mlabel{eq:construct} \notag
\end{equation}
Then we have $M_n\subseteq M_{n+1}$ and
$$ \calf_\ell(\tx) = \dirlim M_{n} = \bigcup \limits^{\infty}_{n=0}M_{n}.$$
Now elements $F\in M_n\setminus M_{n-1}$ are said to have {\bf depth} $n$, denoted by $\dep(F) = n$.
Here are some examples:
\begin{align*}
&\dep(\etree)=0, \dep(\tdun{$x$})=0, \dep(\tdun{$\sigmaup$})=\dep(B^+(\etree))=1,
\dep(\tddeux{$\tiny{\sigmaup}$}{$x$})=\dep(B^+(\tdun{$x$}))=1, \\ &\dep(\tddeux{$\sigmaup$}{$\sigmaup$})=\dep(B^+(B^+(1)))=2,\dep(\tdtroisun{$\tiny{\sigmaup$}}{$x$}{$\tiny{\sigmaup}$})=\dep(B^+(B^+(1)\tdun{$x$}))=2.
\end{align*}

\subsection{Infinitesimal unitary bialgebras}
In this subsection, we obtain an infinitesimal unitary bialgebraic structure on
a class of decorated planar rooted forests.

\subsubsection{A new coalgebra structure on decorated planar rooted forests}
\mlabel{subs:copro}
We define a coproduct $\col$ on $\hlf(\tx)$ recursively on depth.
By linearity, we only need to define $\col(F)$ for $F\in \ldf(\tx)$.
For the initial step of $\dep(F) = 0$, we define
\begin{equation}
\col(F) :=
\left\{
\begin{array}{ll}
0, & \text{ if } F = \etree, \\
\etree \ot \etree, & \text{ if } F = \bullet_x \text{ for some } x\in X,\\
\bullet_{x_{1}}\cdot \col(\bullet_{x_{2}}\cdots\bullet_{x_{m}})+ \col(\bullet_{x_{1}}) \cdot (\bullet_{x_{2}}\cdots\bullet_{x_{m}})
& \text{ if }  F=\bullet_{x_{1}}\cdots \bullet_{x_{m}} \text{ with } m\geq 2 \text{ and }x_i\in X.
\end{array}
\right .
 \mlabel{eq:dele}
\end{equation}
Here in the third case, the definition of $\col$ reduces to the induction on breadth and the dot action is defined in Eq.~(\mref{eq:dota}).

For the induction step of $\dep(F)\geq 1$, if $\bre(F) = 1$, then we may write $F=B^{+}(\lbar{F})$ for some $\lbar{F}\in \ldf(\tx)$ and define
\begin{equation}
\begin{aligned}
\col(F) :=\col B^{+}(\lbar{F}) := \lbar{F} \otimes \etree + (\id\otimes B^{+})\col(\lbar{F}),
\mlabel{eq:dbp}
\end{aligned}
\end{equation}
that is, $ \col B^{+} = \id \ot 1 + (\id\otimes B^{+})\col.$
Here the coproduct $\col(\lbar{F})$ is defined by the induction hypothesis on depth, and we call
Eq.~(\mref{eq:dbp}) the {\bf infinitesimal 1-cocycle condition} (abbreviated {\bf $\epsilon$-cocycle condition}).
If  $\bre(F) \geq 2$, then we may assume $F=T_{1}T_{2}\cdots T_{m}$ with $\bre(F) = m\geq 2$ and define
\begin{equation}
\col(F)=T_{1}\cdot \col(T_{2}\cdots T_{m})+\col(T_{1})\cdot (T_{2}\cdots T_{m}).
\mlabel{eq:dele1}
\end{equation}
Here the definition of $\col$ again reduces to the induction on breadth.

\begin{exam}
Foissy~\mcite{Fo2} also studied another kind of infinitesimal Hopf algebras on (undecorated) planar rooted forests, using a different coproduct $\Delta_\calf$ given by
\begin{equation*}
\Delta_{\calf}(F) :=
\left\{
\begin{array}{ll}
1\ot 1, & \text{ if } F = \etree, \\
F\ot 1+(\id\ot B^{+})\Delta_\calf(\lbar{F}), & \text{ if }  F= B^+(\lbar{F}),\\
F_{1} \cdot \Delta_\calf(F_{2})+ \Delta_\calf (F_{1}) \cdot F_2 -F_{1}\ot F_{2}, & \text{ if } F = F_{1}F_{2}.
\end{array}
\right .
\end{equation*}
We give some examples to expose the differences between these two coproducts $\col$ and $\Delta_\calf$.
On the one hand,
\begin{align*}
 \col\left(\tddeux{$\tiny{\sigmaup}$}{$x$}\right)&=\tdun{$x$}\ot 1+1\ot \tduns{$\tiny {\sigmaup}$};\\
 \col\left(\tdtroisun{$\tiny{\sigmaup}$}{$x$}{$\tiny{\sigmaup}$}\right)&=\tduns{$\tiny {\sigmaup}$}\tdun{$x$}\ot 1+\tduns{$\tiny {\sigmaup}$}\ot\tduns{$\tiny {\sigmaup}$}+1\ot \tddeux{$\tiny{\sigmaup}$}{$x$};\\
 \col\left(\tdquatredeux{$\tiny{\sigmaup}$}{$\tiny{\sigmaup}$}{$\tiny{\sigmaup}$}{$x$}\right)&=\tddeux{$\tiny{\sigmaup}$}{$x$}\tduns{$\tiny {\sigmaup}$}\ot 1+\tddeux{$\tiny{\sigmaup}$}{$x$}\ot\tduns{$\tiny {\sigmaup}$}+\tdun{$x$}\ot\tddeuxs{$\tiny{\sigmaup}$}{$\tiny{\sigmaup}$}+1\ot\tdtroisuns{$\tiny{\sigmaup}$}{$\tiny{\sigmaup}$}{$\tiny{\sigmaup}$}.
 \end{align*}
On the other hand,
 \begin{align*}
\Delta_{\calf}\left(\tdeux\right)&=\tdeux\ot 1+1\ot\tdeux+\tun\ot\tun;\\
\Delta_{\calf}\left(\ttroisun\right)&=\ttroisun\ot 1+1\ot\ttroisun+\tun\tun\ot\tun+\tun\ot\tdeux;\\
\Delta_{\calf}\left(\tquatredeux\right)&=\tquatredeux\ot 1+1\ot\tquatredeux+\tdeux\tun\ot\tun+\tun\ot\ttroisun+\tdeux\ot\tdeux.
 \end{align*}
 \mlabel{ex:excol}
\end{exam}

\begin{remark}
The coproduct $\Delta_{RT}$ on $\hlf(\tx)$ given in~\mcite{ZGG} is defined by $\Delta_{RT}(\etree) = \etree\ot \etree$ and the {\bf cocycle condition}
\begin{equation}
\Delta_{RT} B^{+}(\lbar{F}) := F \otimes \etree + (\id\otimes B^{+})\Delta_{RT}(\lbar{F})\,\text{ for } F\in \ldf(\tx).
\mlabel{eq:1cocy}
\end{equation}
Note the subtle difference between this cocycle condition and the $\epsilon$-cocycle condition in Eq.~(\mref{eq:dbp}).
\mlabel{re:coc}
\end{remark}

We are going to show that $\hlf(\tx)$ equipped with the $\col$ is a coalgebra.
The following two lemmas are needed.

\begin{lemma}
Let $\bullet_{x_{1}}\cdots \bullet_{x_{m}}\in \hlf(\tx)$ with $m\geq 1$ and $x_1, \cdots, x_m\in X$. Then
$$\col(\bullet_{x_{1}}\cdots \bullet_{x_{m}})=\sum_{i=1}^m\bullet_{x_{1}}\cdots\bullet_{x_{i-1}}\otimes\bullet_{x_{i+1}}\cdots\bullet_{x_{m}},$$
with the convention that $\bullet_{x_1} \bullet_{x_0} = \etree$ and $\bullet_{x_{m+1}} \bullet_{x_m} = \etree$.
\mlabel{lem:rt11}
\end{lemma}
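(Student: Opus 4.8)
The plan is to induct on $m$ (equivalently, on the breadth of the forest $\bullet_{x_{1}}\cdots\bullet_{x_{m}}$), using the defining recursion for $\col$ in Eq.~(\mref{eq:dele}) together with the $A$-bimodule structure on $\hlf(\tx)\ot\hlf(\tx)$ recorded in Eq.~(\mref{eq:dota}). Since each $\bullet_{x_i}$ has depth $0$ and the forest stays in $\ldf(\tx)$, only the first part of the definition of $\col$ (the depth-zero case) is ever invoked, so the whole computation takes place inside the "deconcatenation-type" recursion.

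For the base case $m=1$, the second line of Eq.~(\mref{eq:dele}) gives $\col(\bullet_{x_{1}})=\etree\ot\etree$, while the right-hand side of the claimed identity is the single term $\bullet_{x_{1}}\cdots\bullet_{x_{0}}\ot\bullet_{x_{2}}\cdots\bullet_{x_{1}}$, which equals $\etree\ot\etree$ by the stated conventions $\bullet_{x_{1}}\bullet_{x_{0}}=\etree$ and $\bullet_{x_{m+1}}\bullet_{x_{m}}=\etree$. Hence the two sides agree.

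For the inductive step, assume $m\geq 2$ and that the formula holds for products of fewer than $m$ generators $\bullet_{x}$. The third line of Eq.~(\mref{eq:dele}) gives
$$\col(\bullet_{x_{1}}\cdots\bullet_{x_{m}})=\bullet_{x_{1}}\cdot\col(\bullet_{x_{2}}\cdots\bullet_{x_{m}})+\col(\bullet_{x_{1}})\cdot(\bullet_{x_{2}}\cdots\bullet_{x_{m}}).$$
To the first summand I apply the induction hypothesis to $\bullet_{x_{2}}\cdots\bullet_{x_{m}}$ and then the left action $a\cdot(b\ot c)=ab\ot c$ from Eq.~(\mref{eq:dota}); this produces exactly the terms of the target sum with index $i$ running from $2$ to $m$. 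For the second summand, $\col(\bullet_{x_{1}})=\etree\ot\etree$ and the right action $(b\ot c)\cdot a=b\ot ca$ yield $\etree\ot(\bullet_{x_{2}}\cdots\bullet_{x_{m}})$, which is precisely the $i=1$ term of the target sum. Adding the two contributions gives $\sum_{i=1}^{m}\bullet_{x_{1}}\cdots\bullet_{x_{i-1}}\ot\bullet_{x_{i+1}}\cdots\bullet_{x_{m}}$, as desired.

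The argument is entirely routine; the only thing needing a little care is the bookkeeping of the two boundary terms, which is exactly what the conventions $\bullet_{x_{1}}\bullet_{x_{0}}=\etree$ and $\bullet_{x_{m+1}}\bullet_{x_{m}}=\etree$ are designed to absorb — the former matches the term from $\col(\bullet_{x_{1}})\cdot(\bullet_{x_{2}}\cdots\bullet_{x_{m}})$, and the latter matches the $i=m$ term produced by $\bullet_{x_{1}}\cdot\col(\bullet_{x_{2}}\cdots\bullet_{x_{m}})$ through the base case of the induction. I do not expect any genuine obstacle.
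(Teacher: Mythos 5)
Your proposal is correct and follows essentially the same route as the paper's own proof: induction on $m$, with the base case $\col(\bullet_{x_1})=\etree\ot\etree$ and the inductive step obtained by expanding the third line of Eq.~(\mref{eq:dele}) via the bimodule actions of Eq.~(\mref{eq:dota}), the left action absorbing the terms $i=2,\dots,m$ and the right action supplying the $i=1$ term. No gaps; the bookkeeping of the boundary conventions is handled just as in the paper.
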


\begin{proof}
We prove the result by induction on $m\geq 1$. For the initial step of $m=1$, we have
$$\col(\bullet_{x_{1}})=1\otimes 1,$$
and the result is true trivially. For the induction step of $m\geq 2$, we get
\allowdisplaybreaks{
\begin{align*}
\col(\bullet_{x_{1}}\cdots \bullet_{x_{m}}) =& \bullet_{x_{1}} \cdot \col(\bullet_{x_{2}}\cdots\bullet_{x_{m}})+ \col(\bullet_{x_{1}})\cdot (\bullet_{x_{2}}\cdots\bullet_{x_{m}})  \quad (\text{by Eq.~(\mref{eq:dele})})\\
=& \bullet_{x_{1}} \cdot \col(\bullet_{x_{2}}\cdots\bullet_{x_{m}})+ (\etree\ot \etree)\cdot (\bullet_{x_{2}}\cdots\bullet_{x_{m}})  \quad (\text{by Eq.~(\mref{eq:dele})})\\
=&\bullet_{x_{1}} \cdot \col(\bullet_{x_{2}}\cdots\bullet_{x_{m}}) + \etree \otimes\bullet_{x_{2}}\cdots\bullet_{x_{m}} \quad (\text{by Eq.~(\mref{eq:dota})})\\
=&\bullet_{x_{1}} \cdot \left( \sum_{i = 2}^m \bullet_{x_{2}}\cdots\bullet_{x_{i-1}}\otimes\bullet_{x_{i+1}}\cdots\bullet_{x_{m}}\right) + \etree \otimes\bullet_{x_{2}}\cdots\bullet_{x_{m}} \quad (\text{by the induction hypothesis})\\
=&\sum_{i =2}^m \bullet_{x_{1}}\cdots\bullet_{x_{i-1}}\otimes\bullet_{x_{i+1}}\cdots\bullet_{x_{m}}+1\otimes\bullet_{x_{2}}\cdots\bullet_{x_{m}}  \quad (\text{by Eq.~(\mref{eq:dota})})\\
=&\sum_{i=1}^m\bullet_{x_{1}}\cdots\bullet_{x_{i-1}}\otimes\bullet_{x_{i+1}}\cdots\bullet_{x_{m}},
\end{align*}
}
as required.
\end{proof}

\begin{lemma}
Let $F_1, F_2\in \ldf(\tx)$. Then $\col(F_1 F_2) = F_1 \cdot \col(F_2) + \col(F_1) \cdot F_2$.
\mlabel{lem:colff}
\end{lemma}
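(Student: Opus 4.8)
The starting point is that the definition of $\col$ contains a single Leibniz-type recursion on breadth that is valid uniformly: for \emph{every} forest $F = T_1\cdots T_m$ of breadth $m\ge 2$ (the $T_i$ being trees),
\[
\col(F) = T_1\cdot\col(T_2\cdots T_m) + \col(T_1)\cdot(T_2\cdots T_m).
\]
Indeed, a breadth-$\ge 2$ forest of depth $0$ is a product $\bullet_{x_1}\cdots\bullet_{x_m}$ with all $x_i\in X$, for which this is the third branch of Eq.~(\mref{eq:dele}); every other breadth-$\ge 2$ forest has depth $\ge 1$, and then this is Eq.~(\mref{eq:dele1}). So I expect to prove the lemma by induction on $\bre(F_1)$, invoking only this recursion together with the $\hlf(\tx)$-bimodule structure of Eq.~(\mref{eq:dota}); the $\epsilon$-cocycle clause Eq.~(\mref{eq:dbp}) will play no role, since it only makes $\col$ well defined on trees of positive depth and does not itself carry multiplicativity.

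If $F_1 = \etree$ or $F_2 = \etree$, the identity is immediate: $\col(\etree) = 0$, and $\etree$ acts as the identity on $\hlf(\tx)\ot\hlf(\tx)$ from either side (by Eq.~(\mref{eq:dota})), so in the first case both members equal $\col(F_2)$ and in the second both equal $\col(F_1)$. Assume then that $F_1$ and $F_2$ are both nonempty, and induct on $p := \bre(F_1)\ge 1$. When $p = 1$, $F_1 = T$ is a single tree and $TF_2$ has breadth $\ge 2$, so the displayed recursion applied to $TF_2$ reads exactly $\col(TF_2) = T\cdot\col(F_2) + \col(T)\cdot F_2$, which is the assertion.

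For $p\ge 2$, write $F_1 = T_1 G$ with $T_1$ the leftmost tree and $G$ the forest of the remaining $p-1$ trees; then $F_1F_2 = T_1(GF_2)$ has breadth $\ge 2$, so the displayed recursion gives $\col(F_1F_2) = T_1\cdot\col(GF_2) + \col(T_1)\cdot(GF_2)$. Substituting $\col(GF_2) = G\cdot\col(F_2) + \col(G)\cdot F_2$ (induction hypothesis, as $\bre(G) = p-1 < p$) and regrouping with the bimodule rules $(ab)\cdot\omega = a\cdot(b\cdot\omega)$, $a\cdot(\omega\cdot b) = (a\cdot\omega)\cdot b$, $\omega\cdot(ab) = (\omega\cdot a)\cdot b$ (all read off from Eq.~(\mref{eq:dota})) yields
\[
\col(F_1F_2) = (T_1 G)\cdot\col(F_2) + \bigl(T_1\cdot\col(G) + \col(T_1)\cdot G\bigr)\cdot F_2.
\]
Here $T_1 G = F_1$, and $T_1\cdot\col(G) + \col(T_1)\cdot G = \col(T_1 G) = \col(F_1)$ --- by the displayed recursion when $\bre(G)\ge 1$, and trivially (using $\col(\etree) = 0$) when $G = \etree$ --- so $\col(F_1F_2) = F_1\cdot\col(F_2) + \col(F_1)\cdot F_2$, completing the induction. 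I do not anticipate a genuine obstacle; the delicate points are merely the bimodule bookkeeping in the displayed regrouping and, as flagged above, confirming that the uniform breadth recursion really does subsume the depth-$0$ case.
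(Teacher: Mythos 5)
Your proof is correct and follows essentially the same route as the paper's: both arguments reduce to the defining Leibniz-type breadth recursion and the bimodule identities of Eq.~(\mref{eq:dota}), the only cosmetic difference being that you induct on $\bre(F_1)$ alone while the paper inducts on $\bre(F_1)+\bre(F_2)$. Your explicit observation that the third branch of Eq.~(\mref{eq:dele}) and Eq.~(\mref{eq:dele1}) give one uniform recursion covering the depth-$0$ case is a point of care the paper leaves implicit, but it does not change the substance of the argument.
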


\begin{proof}
Consider first that $\bre(F_{1})=0$ or $\bre(F_{2}) = 0$.
Without loss of generality, let $\bre(F_{2}) = 0$. Then $F_2=1$ and
\begin{equation*}
\col(F_{1}F_{2})=\col(F_{1}\etree)=\col(F_{1})=\col(F_{1})\cdot\etree=F_{1}\cdot\col(\etree)+\col(F_{1})\cdot\etree,
\end{equation*}
where the last step employs $\col(\etree) = 0$ in Eq.~(\mref{eq:dele}).

Consider next that $\bre(F_{1})=m_{1}\geq 1$ and $\bre(F_{2})=m_{2}\geq 1$.
In this case, we prove the result by induction on $m_{1}+m_{2}\geq 2$. For the initial step of $m_{1}+m_{2}=2$, we have $m_{1}=1$ and $m_{2}=1$. Then $F_{1}=T_{1}$ and $F_{2}=T_{2}$
for some decorated planar rooted trees $T_1, T_2\in \calt_\ell(\tx)$. So by Eq.~(\mref{eq:dele1}),
\begin{equation*}
\col(F_{1}F_{2})=\col(T_{1}T_{2})=T_{1}\cdot\col(T_{2})+\col(T_{1})\cdot T_{2}=F_{1}\cdot\col(F_{2})+\col(F_{1})\cdot F_{2}.
\end{equation*}
Assume the result is true for $m_{1}+m_{2}<m$ and consider the case of $m_{1}+m_{2}=m\geq 3$.
By symmetry, we can let $m_1\geq 2$ and write
$F_{1}=T_{1}F_{1}^{'}$ with $\bre(T_{1}) = 1$ and $\bre(F_{1}^{'}) = m_1-1$.
Then
\begin{align*}
\col(F_{1}F_{2})=&\ \col(T_{1}F_{1}^{'} F_2)\\
=&\ T_{1}\cdot\col(F_{1}^{'} F_2)+
\col(T_{1})\cdot (F_{1}^{'} F_2)\quad  (\text{by Eq.~(\ref{eq:dele1})})\\
=&\ T_{1}\cdot\Big(F_{1}^{'}\cdot\col(F_{2})+\col(F_{1}^{'})\cdot F_{2}\Big)+\col(T_{1})\cdot (F_{1}^{'}F_{2})\quad(\text{by the induction hypothesis})\\
=&\ (T_{1} F_{1}^{'}) \cdot\col(F_{2})+T_{1}\cdot\col(F_{1}^{'})\cdot F_{2}+ \big(\col(T_{1})\cdot F_{1}^{'}\big) \cdot F_{2}\\
=&\ F_{1}\cdot\col(F_{2})+ \Big(T_{1}\cdot\col(F_{1}^{'})+\col(T_{1})\cdot F_{1}^{'}\Big)\cdot F_{2}\\
=&\ F_{1}\cdot\col(F_{2})+\col(F_{1})\cdot F_{2}\quad(\text{by the induction hypothesis}).
\end{align*}
This completes the proof.
\end{proof}

Next, we prove that $\hlf(\tx)$ is closed under the coproduct $\col$.

\begin{lemma}
For each $F\in \hlf(\tx)$, we have $\col(F)\in \hlf(\tx)\ot\hlf(\tx)$.
\mlabel{lem:dethh}
\end{lemma}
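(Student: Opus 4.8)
The plan is to prove the statement by induction on the depth of $F$, mirroring the recursive definition of $\col$ in Eqs.~(\mref{eq:dele}), (\mref{eq:dbp}) and~(\mref{eq:dele1}). By $\bfk$-linearity it suffices to treat $F\in\ldf(\tx)$, so throughout we assume $F$ is a decorated planar rooted forest whose leaves carry decorations from $\tx$ with all internal vertices (and possibly some leaves) decorated by $\sigmaup$. The key point is that $\hlf(\tx)$ is by definition the free $\bfk$-module on $\ldf(\tx)$, and $\ldf(\tx)$ is closed under both concatenation $\mul$ and the grafting operator $B^+$ (as recalled from~\mcite{ZGG}); hence any expression built from elements of $\ldf(\tx)$ using $\mul$ and $B^+$ lands in $\hlf(\tx)$.

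First I would handle the base case $\dep(F)=0$. Here $F=\etree$, or $F=\bullet_x$, or $F=\bullet_{x_1}\cdots\bullet_{x_m}$ with $m\ge 2$ and $x_i\in X$; in fact Lemma~\mref{lem:rt11} gives the explicit formula $\col(F)=\sum_{i=1}^m\bullet_{x_1}\cdots\bullet_{x_{i-1}}\otimes\bullet_{x_{i+1}}\cdots\bullet_{x_m}$, and every tensor factor appearing is a product of elements $\bullet_{x_j}\in\ldf(\tx)$ (or the empty forest $\etree$), hence lies in $\hlf(\tx)$. So $\col(F)\in\hlf(\tx)\ot\hlf(\tx)$ in depth zero.

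Next, for the inductive step assume $\dep(F)=n\ge 1$ and that $\col(G)\in\hlf(\tx)\ot\hlf(\tx)$ for all $G\in\ldf(\tx)$ with $\dep(G)<n$. There are two subcases. If $\bre(F)=1$, write $F=B^+(\lbar F)$ with $\lbar F\in\ldf(\tx)$ and $\dep(\lbar F)=n-1<n$; by Eq.~(\mref{eq:dbp}), $\col(F)=\lbar F\otimes\etree+(\id\otimes B^+)\col(\lbar F)$. The first summand lies in $\hlf(\tx)\ot\hlf(\tx)$ since $\lbar F,\etree\in\ldf(\tx)$; for the second, the induction hypothesis gives $\col(\lbar F)\in\hlf(\tx)\ot\hlf(\tx)$, and applying $\id\otimes B^+$ preserves this because $B^+$ maps $\hlf(\tx)$ to itself. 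If instead $\bre(F)=m\ge 2$, write $F=T_1T_2\cdots T_m$ with each $T_i\in\calt_\ell(\tx)$; then $F=F_1F_2$ with $F_1=T_1$ and $F_2=T_2\cdots T_m$ both in $\ldf(\tx)$, and Lemma~\mref{lem:colff} yields $\col(F)=F_1\cdot\col(F_2)+\col(F_1)\cdot F_2$. Each $T_i$ has depth $\le n$ but strictly smaller breadth than $F$, so one should actually set up the induction as a double induction on depth and then breadth (or simply invoke Lemma~\mref{lem:colff} together with the already-established fact that $\col(T_i)\in\hlf(\tx)\ot\hlf(\tx)$ for trees, which follows from the $\bre=1$ case applied to depth $\le n$). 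Granting $\col(F_1),\col(F_2)\in\hlf(\tx)\ot\hlf(\tx)$, the bimodule actions of Eq.~(\mref{eq:dota}) — left and right multiplication by elements of $\ldf(\tx)$ — keep us inside $\hlf(\tx)\ot\hlf(\tx)$, so $\col(F)\in\hlf(\tx)\ot\hlf(\tx)$, completing the induction.

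The main obstacle, such as it is, is purely bookkeeping: arranging the induction so that the breadth-reduction step in Eqs.~(\mref{eq:dele}) and~(\mref{eq:dele1}) is legitimate. The cleanest way is to induct on depth, and within a fixed depth induct on breadth: for breadth $1$ one uses Eq.~(\mref{eq:dbp}) and the depth induction hypothesis plus closure of $B^+$; for breadth $\ge 2$ one uses Lemma~\mref{lem:colff} (equivalently Eq.~(\mref{eq:dele1})) together with the breadth induction hypothesis and closure of $\ldf(\tx)$ under concatenation. No genuine difficulty arises because $\ldf(\tx)$ is closed under the only two operations — concatenation $\mul$ and grafting $B^+$ — that occur on the right-hand sides of the defining equations for $\col$, and the bimodule structure on $\hlf(\tx)\ot\hlf(\tx)$ from Eq.~(\mref{eq:dota}) is exactly multiplication on one tensor slot, which visibly preserves $\hlf(\tx)\ot\hlf(\tx)$.
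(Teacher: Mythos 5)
Your proposal is correct and follows essentially the same route as the paper: linearity reduces to basis forests, then a double induction on depth and (within fixed depth) on breadth, using Lemma~\mref{lem:rt11} for depth zero, the $\epsilon$-cocycle formula~(\mref{eq:dbp}) together with closure of $\hlf(\tx)$ under $B^+$ for breadth one, and the Leibniz-type expansion with the bimodule action~(\mref{eq:dota}) for breadth at least two. The only cosmetic difference is that the paper iterates Eq.~(\mref{eq:dele1}) to the fully expanded form $\sum_i (T_1\cdots T_{i-1})\cdot\col(T_i)\cdot(T_{i+1}\cdots T_k)$, whereas you peel off one tree at a time via Lemma~\mref{lem:colff}; both are equally valid.
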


\begin{proof}
By linearity, it suffices to consider basis elements $F\in \ldf(\tx)$.
We prove the result by induction on $\dep(F)\geq 0$ for $F\in \ldf(\tx)$.
For the initial step of $\dep(F)=0$, we have $F=\bullet_{x_1}\cdots\bullet_{x_{m}}$
for some $m\geq 0$ and $x_1, \cdots, x_m\in X$. Here we use the convention that $F=\etree$
when $m=0$. Then by Lemma~\mref{lem:rt11},
$$\col(F)=\col(\bullet_{x_1}\cdots\bullet_{x_{m}})=\sum_{i=1}^m\bullet_{x_{1}}\cdots \bullet_{x_{i-1}}\ot \bullet_{x_{i+1}}\cdots\bullet_{x_m}\in\hlf(\tx)\ot\hlf(\tx).$$

Assume the result is true for $\dep(F) < m$ and consider the case of $\dep(F) =m\geq 1$.
For this case, we apply the induction on breadth $\bre(F)$. Since $\dep(F) = m\geq1$, we have $F\neq 1$ and $\bre(F)\geq 1$.
If $\bre(F)=1$, we have $F = B^{+}(\lbar{F})$ for some $\lbar{F} \in \hlf(\tx)$ and
\begin{align*}
\col(F)= \col\Big(B^{+}(\lbar{F})\Big) = \lbar{F}\ot 1+(\id\ot B^{+})\col(\lbar{F}).
\end{align*}
Note that $\lbar{F}\ot 1\in \hlf(\tx) \ot \hlf(\tx)$ by $\lbar{F}\in \hlf(\tx)$. Moreover by the induction hypothesis,
$$\col(\lbar{F})\in \hlf(\tx)\ot \hlf(\tx) \,\text{ and so }\, (\id\ot B^{+})\col(\lbar{F})\in \hlf(\tx)\ot\hlf(\tx).$$
Hence
$$\lbar{F}\ot 1+(\id\ot B^{+})\col(\lbar{F})\in \hlf(\tx)\ot\hlf(\tx).$$
Assume the result is true for $\dep(F) = m$ and $\bre(F)<k$, and consider the case of
$\dep(F) = m$ and $\bre(F) = k\geq 2$.
Then we may write $F=T_{1}T_{2}\cdots T_{k}$ with $T_1, \cdots, T_k\in \calt_\ell(\tx)$ and so
\begin{align*}
\col(F)=&\col(T_{1}T_{2}\cdots T_{k})\\
=&\ T_{1}\cdot \col(T_{2}\cdots T_{k})+\col(T_{1})\cdot (T_{2}\cdots T_{k})\\
=& \ (T_{1}T_2) \cdot \col(T_{3}\cdots T_{k})+ T_{1} \cdot \col(T_{2}) \cdot (T_3\cdots T_{k})  + \col(T_{1})\cdot (T_{2}\cdots T_{k})\\
=& \cdots = \ \sum_{i =1}^k (T_{1} \cdots T_{i-1}) \cdot\col(T_{i}) \cdot (T_{i+1}\cdots T_{k}).
\end{align*}
For each $i=1, \cdots, k$, by the induction on breadth, we have
$$ \col(T_{i}) \in \hlf(\tx)\ot \hlf(\tx),$$
and whence by Eq.~(\mref{eq:dota}),
$$(T_{1} \cdots T_{i-1}) \cdot\col(T_{i}) \cdot (T_{i+1}\cdots T_{k})\in \hlf(\tx)\ot \hlf(\tx).$$
This completes the proof.
\end{proof}

\begin{theorem}
The pair $(\hlf(\tx), \col)$ is a coalgebra (without counit).
\mlabel{thm:rt1}
\end{theorem}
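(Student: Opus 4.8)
The goal is to establish coassociativity: $(\col \otimes \id)\col = (\id \otimes \col)\col$ as maps $\hlf(\tx) \to \hlf(\tx)^{\otimes 3}$. By linearity it suffices to check this on basis elements $F \in \ldf(\tx)$, and the natural strategy is a double induction, the outer one on $\dep(F)$ and the inner one on $\bre(F)$, mirroring exactly the recursive structure of the definition of $\col$ in Eqs.~(\ref{eq:dele}), (\ref{eq:dbp}) and (\ref{eq:dele1}). The plan is to reduce the general case to two "atomic" cases: forests of depth $0$ (products $\bullet_{x_1}\cdots\bullet_{x_m}$), and trees of the form $B^+(\lbar F)$; the breadth reduction handles everything else.

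First I would dispose of the depth-$0$ case. By Lemma~\ref{lem:rt11}, for $F = \bullet_{x_1}\cdots\bullet_{x_m}$ we have the explicit "deconcatenation-type" formula $\col(F) = \sum_{i=1}^m \bullet_{x_1}\cdots\bullet_{x_{i-1}} \otimes \bullet_{x_{i+1}}\cdots\bullet_{x_m}$. Applying $\col$ again to each tensor factor (again via Lemma~\ref{lem:rt11}, since each factor is still a product of $\bullet_x$'s or is $\etree$, with $\col(\etree)=0$), both $(\col\otimes\id)\col(F)$ and $(\id\otimes\col)\col(F)$ come out equal to $\sum_{1\le i < j \le m} \bullet_{x_1}\cdots\bullet_{x_{i-1}} \otimes \bullet_{x_{i+1}}\cdots\bullet_{x_{j-1}} \otimes \bullet_{x_{j+1}}\cdots\bullet_{x_m}$; this is a routine index bookkeeping, with the $\col(\etree)=0$ convention ensuring the "boundary" terms (where two cut points coincide or are adjacent) vanish correctly.

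Next, for the inductive step with $\bre(F) = 1$ and $\dep(F) \ge 1$, write $F = B^+(\lbar F)$ and use the $\epsilon$-cocycle condition $\col B^+ = \id\otimes 1 + (\id\otimes B^+)\col$. Computing $(\id\otimes\col)\col(F)$: the term $\lbar F \otimes 1$ contributes $\lbar F \otimes \col(1) = \lbar F \otimes 0 = 0$ since $\col(\etree)=0$ (this is the key place where the infinitesimal cocycle condition differs from the classical one in Remark~\ref{re:coc}, and it is what makes things work without a counit), and the term $(\id\otimes B^+)\col(\lbar F)$ contributes $(\id\otimes \col B^+)\col(\lbar F) = (\id\otimes[\id\otimes 1 + (\id\otimes B^+)\col])\col(\lbar F)$. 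Computing $(\col\otimes\id)\col(F)$: from $\lbar F\otimes 1$ we get $\col(\lbar F)\otimes 1$, and from $(\id\otimes B^+)\col(\lbar F)$ we get $(\col\otimes B^+)\col(\lbar F)$. The two sides are then matched using the induction hypothesis $(\col\otimes\id)\col(\lbar F) = (\id\otimes\col)\col(\lbar F)$ and the fact that $B^+$ commutes with $\id$ in the relevant tensor slot; the leftover term $\col(\lbar F)\otimes 1$ on the left matches $(\id\otimes(\id\otimes 1))(\id\otimes\col)\col(\lbar F)$ obtained from expanding $\col B^+$ in the inner slot, again invoking the induction hypothesis.

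Finally, for $\bre(F) = k \ge 2$, write $F = T_1 \cdots T_k$ and use Lemma~\ref{lem:colff} (extended inductively, as in the proof of Lemma~\ref{lem:dethh}) to get $\col(F) = \sum_{i=1}^k (T_1\cdots T_{i-1})\cdot \col(T_i)\cdot(T_{i+1}\cdots T_k)$; since $\col$ is a derivation for the bimodule action on both sides, coassociativity for $F$ follows formally from coassociativity for each tree $T_i$ (which has smaller breadth or is handled by the $B^+$ case), together with compatibility of the bimodule actions of Eq.~(\ref{eq:dota}) with the tensor slots. I expect the main obstacle to be the $\bre(F)=1$, $\dep(F)\ge1$ step: the bookkeeping of which of the three tensor slots each $B^+$ and each $\id$ lands in is delicate, and one must be careful that the "boundary" term $\lbar F\otimes 1$ interacts correctly — it is precisely the vanishing $\col(1)=0$ (absent the counit) that reconciles the two sides, so this is where the argument is genuinely different from the classical Connes–Kreimer computation and where I would spend the most care.
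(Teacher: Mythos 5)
Your proposal is correct and follows essentially the same route as the paper: a double induction on depth and breadth, with the depth-$0$ case handled by the explicit formula of Lemma~\ref{lem:rt11}, the breadth-$1$ case by expanding the $\epsilon$-cocycle condition on both sides and using $\col(\etree)=0$ to kill the boundary term, and the breadth-$\geq 2$ case by the derivation property of Lemma~\ref{lem:colff}. The only cosmetic difference is that the paper splits a forest of breadth $\geq 2$ as $F=F_1F_2$ and runs the Sweedler-notation computation directly, rather than decomposing fully into trees $T_1\cdots T_k$, but the underlying argument is identical.
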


\begin{proof}
It suffices to prove the coassociative law
\begin{equation}
(\id\otimes \col)\col(F)=(\col\otimes \id)\col(F)\,\text{ for } F\in \ldf(\tx)
\mlabel{eq:coass}
\end{equation}
by induction on $\dep(F)\geq 0$. For the initial step of $\dep(F)=0$, we have
$F=\bullet_{x_{1}} \cdots\bullet_{x_{m}}$ for some $m\geq 0$ and $x_1, \cdots, x_m\in X$. Then
\allowdisplaybreaks{
\begin{align*}
&(\id\ot\col)\col(\bullet_{x_{1}} \cdots\bullet_{x_{m}})\\
=&\  (\id\ot\col)\Big(\sum_{i=1}^m\bullet_{x_{1}} \cdots\bullet_{x_{i-1}}\otimes\bullet_{x_{i+1}} \cdots\bullet_{x_{m}}\Big)\quad(\text{by Lemma~\mref{lem:rt11}})\\
=&\ (\id\ot\col)\Big(\sum_{i=1}^{m-1}\bullet_{x_{1}} \cdots\bullet_{x_{i-1}}\otimes\bullet_{x_{i+1}} \cdots\bullet_{x_{m}}+
\bullet_{x_{1}} \cdots\bullet_{x_{m-1}}\ot 1\Big)\\
=&\ \sum_{i=1}^{m-1} \bullet_{x_{1}} \cdots\bullet_{x_{i-1}}\otimes\col(\bullet_{x_{i+1}} \cdots\bullet_{x_{m}})\quad(\text{by $\col(1)=0$ in Eq.~(\ref{eq:dele})})\\
=&\ \sum_{i=1}^{m-1} \bullet_{x_{1}} \cdots\bullet_{x_{i-1}}\otimes\left(\sum_{j=i+1}^m\bullet_{x_{i+1}}\cdots\bullet_{x_{j-1}}\ot\bullet_{x_{j+1}}\cdots\bullet_{x_{m}}\right)\quad(\text{by Lemma~\mref{lem:rt11}})\\
=&\sum_{i=1}^{m-1}\sum_{j= i+1}^{m} \bullet_{x_{1}}\cdots\bullet_{x_{i-1}}\otimes\bullet_{x_{i+1}}\cdots\bullet_{x_{j-1}}\otimes\bullet_{x_{j+1}}\cdots\bullet_{x_{m}}\\
=&\sum_{j=2}^m\sum_{i=1}^{j-1}\bullet_{x_{1}}\cdots\bullet_{x_{i-1}}\otimes\bullet_{x_{i+1}}\cdots\bullet_{x_{j-1}}\otimes
\bullet_{x_{j+1}}\cdots\bullet_{x_{m}}+\col(1)\ot\bullet_{x_{2}}\cdots\bullet_{x_{m}} \quad(\text{by $\col(1)=0$ in Eq.~(\ref{eq:dele})})\\
=&\sum_{j=2}^m\Big(\sum_{i= 1}^{j-1}\bullet_{x_{1}}\cdots\bullet_{x_{i-1}}\otimes\bullet_{x_{i+1}}\cdots\bullet_{x_{j-1}}\Big)\otimes\bullet_{x_{j+1}}\cdots\bullet_{x_{m}}+\col(1)\ot\bullet_{x_{2}}\cdots\bullet_{x_{m}}\\
=&\sum_{j=2}^m\col(\bullet_{x_{1}}\cdots\bullet_{x_{j-1}})\otimes\bullet_{x_{j+1}}\cdots\bullet_{x_{m}}+\col(1)\ot\bullet_{x_{2}}\cdots\bullet_{x_{m}}\quad(\text{by Lemma~\mref{lem:rt11}})\\
=&\
(\col\ot\id)\Big(\sum_{j=2}^m\bullet_{x_{1}}\cdots\bullet_{x_{j-1}}\ot\bullet_{x_{j+1}}\cdots\bullet_{x_{m}}+1\ot\bullet_{x_{2}}\cdots\bullet_{x_{m}}\Big)\\
=&\ (\col\otimes\id)\Big(\sum_{j=1}^{m}\bullet_{x_{1}}\cdots\bullet_{x_{j-1}}\otimes\bullet_{x_{j+1}}\cdots\bullet_{x_{m}}\Big)\\
=&\ (\col\otimes\id)\col(\bullet_{x_{1}}\cdots\bullet_{x_{m}})\quad(\text{by Lemma~\mref{lem:rt11}}).
\end{align*}
}
Assume that Eq.~(\mref{eq:coass}) is valid for $\dep(F)\leq n$ and consider the case of $\dep(F)=n+1$.
We now apply the induction on breadth $\bre(F)$. As $\dep(F)\geq 1$, we get $F\neq 1$ and $\bre(F)\geq 1$.
When $\bre(F)=1$, we may write $F=B^{+}(\lbar{F})$ for some $\lbar{F}\in \ldf(\tx)$ and
\allowdisplaybreaks{
\begin{align*}
(\id\otimes\col)\col(F)=&\ (\id\otimes \col)\col(B^{+}(\lbar{F}))\\
=&\ (\id\otimes \col)\Big(\lbar{F}\otimes \etree+(\id\otimes B^{+})\col(\lbar{F})\Big)\quad(\text{by Eq.~(\mref{eq:dbp})})\\
=&\ \lbar{F}\otimes \col(\etree)+(\id\otimes (\col B^{+}))\col(\lbar{F})\\
=&\  (\id\otimes (\col B^{+}))\col(\lbar{F})  \quad(\text{by $\col(1)=0$ in Eq.~(\ref{eq:dele})})\\
=&\   \Big(\id\otimes \big(\id\ot \etree + (\id \ot B^+)\col \big) \Big)\col(\lbar{F}) \quad(\text{by Eq.~(\mref{eq:dbp})})\\
=&\ \Big(\id\otimes \id\otimes \etree+(\id\otimes \id\otimes B^{+})(\id\otimes \col)\Big)\col(\lbar{F})\\
=&\ (\id\otimes \id\otimes \etree)\col(\lbar{F})+(\id\otimes \id\otimes B^{+})(\col\otimes \id)\col(\lbar{F})\quad(\text{by the induction on \dep(F)})\\
=&\ \col(\lbar{F})\otimes \etree+(\col\otimes B^{+})\col(\lbar{F})\\
=&\ (\col\ot\id)\Big(\lbar{F}\ot \etree+(\id\ot B^{+})\col(\lbar{F})\Big) \\
=&\ (\col\otimes \id)\col(F) \quad(\text{by Eq.~(\mref{eq:dbp})}).
\end{align*}
}
Assume that Eq.~(\mref{eq:coass}) holds for $\dep(F)=n+1$ and $\bre(F)\leq m$.
Consider the case when $\dep(F)=n+1$ and $\bre(F)=m+1\geq 2$.
Then we can write $F=F_{1}F_{2}$ for some $F_1, F_2\in \ldf(\tx)$ with $\bre(F_1), \bre(F_2)< \bre(F)$. Hence
\allowdisplaybreaks{
\begin{align*}
&(\id\ot\col)\col(F_1F_2)\\
=&\ (\id\ot\col)(F_1 \cdot \col(F_2)+\col(F_1) \cdot F_2)\\
=&\ (\id\ot\col)\Big(\sum_{(F_2)}F_1F_{2(1)}\ot F_{2(2)}+\sum_{(F_1)}F_{1(1)}\ot F_{1(2)}F_2\Big) \quad(\text{by Eq.~(\mref{eq:dota})})\\
=&\ \sum_{(F_2)}F_1F_{2(1)}\ot\col(F_{2(2)})+\sum_{(F_1)}F_{1(1)}\ot\col(F_{1(2)}F_2)\\
=&\ \sum_{(F_2)}F_1F_{2(1)}\ot\col(F_{2(2)})+\sum_{(F_{1})}F_{1(1)}\ot \Big( F_{1(2)}\cdot \col (F_{2})+\col(F_{1(2)})\cdot F_{2} \Big)\\
=&\ \sum_{(F_2)}F_1F_{2(1)}\ot F_{2(2)}\ot F_{2(3)}+\sum_{(F_1)}F_{1(1)}\ot\sum_{(F_2)}F_{1(2)}F_{2(1)}\ot F_{2(2)}\\
&+\sum_{(F_1)}F_{1(1)}\ot F_{1(2)}\ot F_{1(3)}F_2 \quad(\text{by the induction on breadth and Eq.~(\ref{eq:dota})})\\
=&\ \sum_{(F_2)}F_1F_{2(1)}\ot F_{2(2)}\ot F_{2(3)}+\sum_{(F_2)}\sum_{(F_1)}F_{1(1)}\ot F_{1(2)}F_{2(1)}\ot F_{2(2)}\\
&+\sum_{(F_1)}F_{1(1)}\ot F_{1(2)}\ot F_{1(3)}F_2\\
=&\ \sum_{(F_2)}\col(F_1F_{2(1)})\ot F_{2(2)}+\sum_{(F_1)}\col(F_{1(1)})\ot F_{1(2)}F_2 \quad(\text{by the induction on breadth})\\
=&\ (\col\ot\id)(\sum_{(F_2)}F_1F_{2(1)}\ot F_{2(2)}+\sum_{(F_1)}F_{1(1)}\ot F_{1(2)}F_2)\\
=&\ (\col\ot\id)(F_1 \cdot \col(F_2)+\col(F_1) \cdot F_2)  \quad(\text{by Eq.~(\mref{eq:dota})})\\
=&\ (\col\ot\id)\col(F_1F_2) \quad(\text{by Lemma~\mref{lem:colff}}).
\end{align*}
}
This completes the induction on breadth and the induction on depth.
\end{proof}

\subsubsection{A combinatorial description of $\col$}
\mlabel{subs:combdes}
This subsection is devoted to a combinatorial description of the coproduct $\col$ given recursively in Subsection~\mref{subs:copro},
as in the case of the coproduct in the Connes-Kreimer Hopf algebra by admissible cuts and in the paper of Foissy for his version of infinitesimal Hopf algebra.
To get this explicit construction, first note that the number of terms in the coproduct $\col(F)$ of a decorated planar rooted forest $F$
equals the number of vertices of $F$, which suggests that the coproduct $\col(F)$ depends on its effect on each vertex. To make this more precise,
let us set up some order relations on the vertices of a decorated planar rooted forest, which was introduced by Foissy~\cite{Fo1,Fo2}
in the case of (undecorated) planar rooted forests. For a forest $F$, denote by $V(F)$ the set of vertices of $F$.

\begin{defn}
Let $F = T_1 \cdots T_n\in \ldf(\tx)$ with $T_1, \cdots, T_n\in \calt_\ell(\tx)$ and $n\geq 1$,
and let $a,b\in V(F)$ be two vertices. Then
\begin{enumerate}
\item $a\leqh b$ ({\bf being higher}) if there exists a (directed) path from $a$ to $b$ in $F$ and the edges of $F$ being oriented from roots to leaves;
\item $a\leql b$ ({\bf being more on the left}) if $a$ and $b$ are not comparable for $\leqh $ and one of the following assertions is satisfied:
   \begin{enumerate}
   \item  $a$ is a vertex of $T_i$ and $b$ is a vertex of $T_j$ with $1\leq j< i \leq n$.
   \item $a$ and $b$ are vertices of the same $T_i$, and $a\leql b$ in the forest obtained from $T_i$ by deleting its root;
    \end{enumerate}
\item $a\leqhl b$ ({\bf being higher or more on the left}) if $a\leqh b$ or $a\leql b$.
\end{enumerate}
\mlabel{def:order}
\end{defn}

The $\leqh $ and $\leql$ are partial orders on $V(F)$, and the $\leqhl$ is a linear order on $V(F)$~\cite{Fo1,Fo2}.
As usual, we denote $a\lhl b$ if $a\leqhl b$ but $a\neq b$.

Let $G$ be a graph and $S$ a set of vertices of $G$.
The {\bf induced subgraph} in $G$ by $S$ is the graph whose vertex set is $S$ and whose edge set consists of all of the edges in $G$ that have both endpoints in $S$~\mcite{Dies}. In particular, the induced subgraph by the empty set is the empty graph.
Let $F\in \ldf(\tx)$ be a decorated planar rooted forest.
Let us agree to view $F$ as a graph.
For each vertex $a\in V(F)$, denote by $B_{a}$ the induced subgraph in $F$ by the set $\{b\in V(F) \mid a \lhl b\}$, and by $R_{a}$ the induced subgraph in $F$ by the set $V(F)\setminus (V(B_a)\cup\{ a \})$.
Equivalently, $R_a$ is the induced subgraph in $F$ by the set $\{b\in V(F) \mid b\lhl a\}$.
Note that both $B_a$ and $R_a$ are decorated planar rooted forests in $\ldf(\tx)$, not containing the vertex $a$.
Now we are ready for the combinatorial description of $\col$:
\begin{equation}
\col(F) =\sum_{a\in V(F)} B_{a}\ot R_{a}\,\text{ for }\, F\in \ldf(\tx).
\mlabel{eq:eqcombint}
\end{equation}
Here we use the convention that $\col(F)=0$ when $F=\etree$.
Before we go on to prove that this combinatorial description coincides with recursive definition given in Subsection~\mref{subs:copro},
let us compute some examples for better understanding of Eq.~(\mref{eq:eqcombint}).

\begin{exam}
Consider $F=\tdquatretrois{$\tiny{\sigmaup}$}{$\tiny{\sigmaup}$}{$x$}{$y$}$ with $x,y\in X$.
Then $$\tduns{$\tiny {\sigmaup}$}\ (\text{the root}) \lhl \tduns{$\tiny {\sigmaup}$} \ (\text{not the root}) \lhl \tdun{$x$} \lhl \tdun{$y$}.$$
If $a=\tduns{$\tiny {\sigmaup}$}$ (the root),
then
$$B_{a}=\tdun{$y$}\tddeux{$\tiny{\sigmaup}$}{$x$}\,\text{ and }\, R_a=\etree.$$
If $a=\tdun{$y$}$, then
$$B_{a}=\etree\,\text{ and }\, R_a=\tdtroisdeux{$\tiny{\sigmaup}$}{$\tiny{\sigmaup}$}{$x$}.$$
If $a=\tduns{$\tiny {\sigmaup}$}$ (not the root), then
$$B_{a}=\tdun{$y$}\tdun{$x$}\,\text{ and }\, R_a=\tduns{$\tiny {\sigmaup}$} \ (\text{the root}).$$
If $a=\tdun{$x$}$, then
$$B_{a}=\tdun{$y$}\,\text{ and }\, R_a=\tddeux{$\tiny{\sigmaup}$}{$\tiny{\sigmaup}$}.$$
Consequently,
$$\col(\tdquatretrois{$\tiny{\sigmaup}$}{$\tiny{\sigmaup}$}{$x$}{$y$})=\tdun{$y$}\tddeux{$\tiny{\sigmaup}$}{$x$}\ot \etree+1\ot\tdtroisdeux{$\tiny{\sigmaup}$}{$\tiny{\sigmaup}$}{$x$}+\tdun{$y$}\tdun{$x$}\ot \tduns{$\tiny {\sigmaup}$}+\tdun{$y$}\ot\tddeux{$\tiny{\sigmaup}$}{$\tiny{\sigmaup}$},$$
which is different from the one computed by admissible cuts~\mcite{Fo3,ZGG}:
$$\Delta(\tdquatretrois{$\tiny{\sigmaup}$}{$\tiny{\sigmaup}$}{$x$}{$y$})=
\tdquatretrois{$\tiny{\sigmaup}$}{$\tiny{\sigmaup}$}{$x$}{$y$}\ot 1
+ \tdun{$x$}\ot\tdtroisun{$\tiny{\sigmaup}$}{$\sigmaup$}{$\tiny{y}$}
+\tdun{$y$}\ot\tdtroisdeux{$\tiny{\sigmaup}$}{$\tiny{\sigmaup}$}{$x$}
+\tddeux{$\tiny{\sigmaup}$}{$x$}\ot\tddeux{$\tiny{\sigmaup}$}{$y$}
+\tdun{$y$}\tdun{$x$}\ot\tddeux{$\tiny{\sigmaup}$}{$\tiny{\sigmaup}$}
+ \tdun{$y$}\tddeux{$\tiny{\sigmaup}$}{$x$}\ot \tduns{$\tiny {\sigmaup}$}
+1\ot \tdquatretrois{$\tiny{\sigmaup}$}{$\tiny{\sigmaup}$}{$x$}{$y$}.$$
\end{exam}

\begin{exam}
Let $F=\tdtroisun{$\tiny{\sigmaup}$}{$x$}{$\tiny{\sigmaup}$}\tdquatretrois{$\tiny{\sigmaup}$}{$\tiny{\sigmaup}$}{$z$}{$y$}\tddeux{$\tiny{\sigmaup}$}{$w$}$
with $x,y,z,w\in X$.
Denote by $T_{1}=\tdtroisun{$\tiny{\sigmaup}$}{$x$}{$\tiny{\sigmaup}$}$, $T_{2}=\tdquatretrois{$\tiny{\sigmaup}$}{$\tiny{\sigmaup}$}{$z$}{$y$}$
and $T_{3}=\tddeux{$\tiny{\sigmaup}$}{$w$}$. Then we have the order:
\begin{align*}
&\tduns{$\tiny{\sigmaup}$}\ (\text{the root of $T_{3}$})\lhl\tdun{$w$}\lhl\tdun{$\tiny{\sigmaup}$}\ (\text{the root of $T_{2}$})\lhl\tduns{$\tiny{\sigmaup}$}\ (\text{not the root in $T_{2}$})\lhl\tdun{$z$}\\
&\lhl\tdun{$y$}\lhl\tduns{$\tiny{\sigmaup}$}\ (\text{the root of $T_{1}$})\lhl\tdun{$x$}\lhl\tduns{$\tiny{\sigmaup}$}\ (\text{the leaf in $T_{1}$}).
\end{align*}
If $a=\tduns{$\tiny {\sigmaup}$}$ (the root of $T_{1}$), then
$$B_{a}=\tduns{$\tiny {\sigmaup}$}\tdun{$x$}\,\text{ and }\, R_a=\tdquatretrois{$\tiny{\sigmaup}$}{$\tiny{\sigmaup}$}{$z$}{$y$}\tddeux{$\tiny{\sigmaup}$}{$w$}.$$
If $a=\tduns{$\tiny {\sigmaup}$}$ (the root of $T_{2}$), then
$$B_{a}=\tdtroisun{$\tiny{\sigmaup}$}{$x$}{$\tiny{\sigmaup}$}\tdun{$y$}\tddeux{$\tiny{\sigmaup}$}{$z$}\,\text{ and }\, R_a=\tddeux{$\tiny{\sigmaup}$}{$w$}.$$
If $a=\tduns{$\tiny {\sigmaup}$}$ (not the root in $T_{2}$), then
$$B_{a}=\tdtroisun{$\tiny{\sigmaup}$}{$x$}{$\tiny{\sigmaup}$}\tdun{$y$}\tdun{$z$}\,\text{ and }\, R_a=\tduns{$\tiny {\sigmaup}$}\tddeux{$\tiny{\sigmaup}$}{$w$}.$$
Repeat this process until $a$ runs over $V(F)$ and conclude
\begin{align*}
\col(\tdtroisun{$\tiny{\sigmaup}$}{$x$}{$\tiny{\sigmaup}$}\tdquatretrois{$\tiny{\sigmaup}$}{$\tiny{\sigmaup}$}{$z$}{$y$}\tddeux{$\tiny{\sigmaup}$}{$w$})=&\tduns{$\tiny {\sigmaup}$}\tdun{$x$}\ot \tdquatretrois{$\tiny{\sigmaup}$}{$\tiny{\sigmaup}$}{$z$}{$y$}\tddeux{$\tiny{\sigmaup}$}{$w$}+\etree\ot \tddeux{$\tiny{\sigmaup}$}{$x$}\tdquatretrois{$\tiny{\sigmaup}$}{$\tiny{\sigmaup}$}{$z$}{$y$}\tddeux{$\tiny{\sigmaup}$}{$w$}+\tduns{$\tiny {\sigmaup}$}\ot \tduns{$\tiny {\sigmaup}$}\tdquatretrois{$\tiny{\sigmaup}$}{$\tiny{\sigmaup}$}{$z$}{$y$}\tddeux{$\tiny{\sigmaup}$}{$w$}+\tdtroisun{$\tiny{\sigmaup}$}{$x$}{$\tiny{\sigmaup}$}
\tdun{$y$}\tddeux{$\tiny{\sigmaup}$}{$z$}\ot\tddeux{$\tiny{\sigmaup}$}{$w$}+\tdtroisun{$\tiny{\sigmaup}$}{$x$}{$\tiny{\sigmaup}$}\ot\tdtroisdeux{$\tiny{\sigmaup}$}{$\tiny{\sigmaup}$}{$z$}\tddeux{$\tiny{\sigmaup}$}{$w$}\\ &+\tdtroisun{$\tiny{\sigmaup}$}{$x$}{$\tiny{\sigmaup}$}\tdun{$y$}\tdun{$z$}\ot\tduns{$\tiny {\sigmaup}$}\tddeux{$\tiny{\sigmaup}$}{$w$}+\tdtroisun{$\tiny{\sigmaup}$}{$x$}{$\tiny{\sigmaup}$}\tdun{$y$}\ot\tddeux{$\tiny{\sigmaup}$}{$\tiny{\sigmaup}$}\tddeux{$\tiny{\sigmaup}$}{$w$}
+\tdtroisun{$\tiny{\sigmaup}$}{$x$}{$\tiny{\sigmaup}$}\tdquatretrois{$\tiny{\sigmaup}$}{$\tiny{\sigmaup}$}{$z$}{$y$}\tdun{$w$}\ot \etree+\tdtroisun{$\tiny{\sigmaup}$}{$x$}{$\tiny{\sigmaup}$}\tdquatretrois{$\tiny{\sigmaup}$}{$\tiny{\sigmaup}$}{$z$}{$y$}\ot\tduns{$\tiny {\sigmaup}$}.
\end{align*}
\end{exam}

Next we prove that the combinatorial definition of $\col$ in Eq.~(\mref{eq:eqcombint}) is the same as the recursive definition.
First note that they agree on the initial step:
\begin{align}
\col(\etree) = 0\,\text{ and }\, \col(\bullet_x) = \etree\ot \etree \,\text{ for }\, x\in X.
\mlabel{eq:eqinitial}
\end{align}
So it suffices to show that $\col$ in Eq.~(\mref{eq:eqcombint}) satisfies Eqs.~(\mref{eq:dbp}) and~(\mref{eq:dele1}), which determinate the induction step of the recursive definition of $\col$.

\begin{lemma}
Let $\col$ be given in Eq.~(\mref{eq:eqcombint}) and $T_{1},\cdots,T_{n} \in \calt_\ell(\tx)$ with $n\geq 2$. Then
\begin{equation*}
\col(T_{1}\cdots T_{n})=T_{1}\cdot\col(T_{2}\cdots T_{n})+ \col(T_{1})\cdot (T_{2}\cdots T_{n}).
\end{equation*}
\mlabel{le:lelip}
\end{lemma}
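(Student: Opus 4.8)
The plan is to read everything off from the combinatorial formula $\col(F)=\sum_{a\in V(F)}B_a\ot R_a$ applied to $F:=T_1\cdots T_n$, by partitioning the vertex set as $V(F)=V(T_1)\sqcup V(T_2\cdots T_n)$ and matching the block over $V(T_1)$ with $\col(T_1)\cdot(T_2\cdots T_n)$ and the block over $V(T_2\cdots T_n)$ with $T_1\cdot\col(T_2\cdots T_n)$.

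First I would record two order-theoretic facts drawn directly from Definition~\mref{def:order}. (1) If $a\in V(T_1)$ and $b\in V(T_j)$ for some $j$ with $2\le j\le n$, then $b\lhl a$: since $a$ and $b$ lie in distinct connected components they are not $\leqh$-comparable, and because $1<j$ the clause of Definition~\mref{def:order} comparing two vertices belonging to distinct trees of the forest (applied with the roles exchanged) yields $b\leql a$. (2) The restriction of $\leqhl$ to $V(T_1)$, and likewise to $V(T_2\cdots T_n)$, coincides with the intrinsic order $\leqhl$ of the forest $T_1$, resp. $T_2\cdots T_n$: a $\leqh$-comparison takes place inside a single tree, a $\leql$-comparison of two vertices in distinct $T_i,T_{i'}$ depends only on the relative order of the indices (unchanged on passing to the subforest), and a $\leql$-comparison of two vertices in the same $T_i$ is by definition internal to $T_i$. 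For a forest $G$ and a vertex $a\in V(G)$, I will write $B^G_a$ and $R^G_a$ for the two forests of Eq.~(\mref{eq:eqcombint}) built inside $G$.

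Next, fixing $a\in V(F)$, I would determine $B_a$ and $R_a$ by cases, using fact (1) to locate which vertices sit $\lhl$-above and $\lhl$-below $a$, fact (2) to identify the surviving induced subgraphs intrinsically, the observation recorded before the statement that these induced subgraphs are genuine elements of $\ldf(\tx)$, and the elementary fact that forming an induced subgraph is compatible with the planar-concatenation decomposition of a forest. If $a\in V(T_1)$: every vertex $\lhl$-above $a$ lies in $V(T_1)$ by (1), so $B_a=B^{T_1}_a$; and $\{b\mid b\lhl a\}=\{b\in V(T_1)\mid b\lhl a\}\sqcup V(T_2\cdots T_n)$ since all of $T_2\cdots T_n$ is $\lhl a$, so the surviving subgraph is $R_a=R^{T_1}_a(T_2\cdots T_n)$. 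If $a\in V(T_2\cdots T_n)$: all of $V(T_1)$ is $\lhl$-above $a$ by (1), so $\{b\mid a\lhl b\}=V(T_1)\sqcup\{b\in V(T_2\cdots T_n)\mid a\lhl b\}$ and $B_a=T_1\,B^{T_2\cdots T_n}_a$; and no vertex of $T_1$ is $\lhl a$, so $R_a=R^{T_2\cdots T_n}_a$.

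Finally I would sum Eq.~(\mref{eq:eqcombint}) over the two blocks. Using the bimodule actions of Eq.~(\mref{eq:dota}), the $V(T_1)$-block contributes $\sum_{a\in V(T_1)}B^{T_1}_a\ot R^{T_1}_a(T_2\cdots T_n)=\col(T_1)\cdot(T_2\cdots T_n)$, and the $V(T_2\cdots T_n)$-block contributes $\sum_{a}T_1\,B^{T_2\cdots T_n}_a\ot R^{T_2\cdots T_n}_a=T_1\cdot\col(T_2\cdots T_n)$; adding them gives the asserted identity. I expect the only delicate point to be the bookkeeping of the preceding paragraph: checking that, when a vertex is deleted from $T_1$, its surviving induced subgraph stays in the left factor alongside the untouched $T_2\cdots T_n$, and symmetrically in the other case, which is precisely what the ``induced subgraph commutes with concatenation'' remark guarantees, so no genuine obstacle arises. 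Alternatively, the case of general $n$ follows at once from the case $n=2$ applied with $F_1=T_1$ and $F_2=T_2\cdots T_n$.
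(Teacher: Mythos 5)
Your proof is correct and takes essentially the same route as the paper's: both partition $V(T_1\cdots T_n)$ into $V(T_1)$ and $V(T_2\cdots T_n)$, identify the two blocks as $\sum_a B_a^{T_1}\ot R_a^{T_1}(T_2\cdots T_n)$ and $\sum_a T_1 B_a^{T_2\cdots T_n}\ot R_a^{T_2\cdots T_n}$, and then sum using the bimodule action of Eq.~(\mref{eq:dota}). Your write-up is in fact somewhat more explicit than the paper's about the order-theoretic facts (why all of $T_2\cdots T_n$ is $\lhl$ every vertex of $T_1$, and why $\leqhl$ restricts to the intrinsic order on each subforest), which the paper leaves implicit in its definition of $B'_a$ and $R'_a$ as intersections.
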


\begin{proof}
We have
\begin{align*}
\col(T_{1}\cdots T_{n})=&\,\sum_{a\in V\left(T_{1}\cdots T_{n}\right)} B_{a}\ot R_{a}\\
=&\,\sum_{a\in V\left(T_{2}\cdots T_{n}\right)}B_{a}\ot R_{a}+\sum_{a\in V\left(T_{1}\right)}B_{a}\ot R_{a}\\
=&\,\sum_{a\in V\left(T_{2}\cdots T_{n}\right)} (T_{1} B'_{a}) \ot R_{a} + \sum_{a\in V\left(T_{1}\right)}  B_{a}\ot (R'_{a} T_{2}\cdots T_{n})\\
=&\,\sum_{a\in V\left(T_{2}\cdots T_{n}\right)}T_{1}\cdot (B'_{a}\ot R_{a}) + \sum_{a\in V\left(T_{1}\right)} (B_{a}\ot R'_{a}) \cdot \left(T_{2}\cdots T_{n}\right)\\
=&\,T_{1}\cdot \left(\sum_{a\in V\left(T_{2}\cdots T_{n}\right)}B'_{a}\ot R_{a}\right)+\left(\sum_{a\in V\left(T_{1}\right)}B_{a}\ot R'_{a}\right)\cdot \left(T_{2}\cdots T_{n}\right)\\
=&\,T_{1}\cdot\col(T_{2}\cdots T_{n})+\col(T_{1})\cdot \left(T_{2}\cdots T_{n}\right)\quad (\text{by Eq.~(\mref{eq:eqcombint})}),
\end{align*}
where $B'_a$ is the intersection of $B_a$ with $T_2\cdots T_n$, and $R'_a$ is the intersection of $R_a$ with $T_1$.
\end{proof}

\begin{lemma}
Let  $\col$ be given in Eq.~(\mref{eq:eqcombint}) and $F= B^+(\lbar{F}) \in \calt_\ell(\tx)$. Then
\begin{equation}
\col (F) =\lbar{F}\ot 1+(\id\ot B^+)\col(\lbar{F}).
\mlabel{eq:eqcomb}
\end{equation}
\mlabel{le:lecocycle}
\end{lemma}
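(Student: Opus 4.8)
The plan is to verify that the combinatorial formula in Eq.~(\mref{eq:eqcombint}), applied to the single tree $F=B^+(\lbar{F})$, reproduces the two summands on the right of Eq.~(\mref{eq:eqcomb}). Write $r$ for the root of $F$; by the definition of $B^+$ its decoration is $\sigmaup$ and deleting $r$ returns $\lbar{F}$, so that $V(F)=\{r\}\sqcup V(\lbar{F})$. First I would record that $r$ is the $\leqhl$-minimum of $V(F)$, since $r\leqh b$, hence $r\leqhl b$, for every $b\ne r$. Consequently the term of Eq.~(\mref{eq:eqcombint}) indexed by $a=r$ is $B_r\ot R_r$, where $B_r$ is the induced subgraph of $F$ on $\{b\mid r\lhl b\}=V(\lbar{F})$, which is $\lbar{F}$ itself, and $R_r$ is the induced subgraph on the empty set, which is $\etree$. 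This term therefore contributes exactly $\lbar{F}\ot\etree$, the first summand of Eq.~(\mref{eq:eqcomb}).

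Next I would treat the terms indexed by $a\in V(\lbar{F})$. The preliminary observation is that the linear order $\leqhl$ on $V(F)$ restricts to the order $\leqhl$ on $V(\lbar{F})$: grafting the trees of $\lbar{F}$ onto the common root $r$ changes neither $\leqh$ nor $\leqh$-comparability among the vertices of $\lbar{F}$ (no directed path between two vertices of $\lbar{F}$ can pass through $r$, all edges at $r$ being oriented away from it), and for $\leql$ this is exactly what Definition~\mref{def:order}(b)(ii) builds in, since it compares two vertices of a tree by passing to the forest obtained after deleting the root. As $r$ is the global minimum, $r\lhl a$ for every $a\in V(\lbar{F})$; hence $r\notin\{b\mid a\lhl b\}$ while $r\in\{b\mid b\lhl a\}$. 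It follows that $\{b\in V(F)\mid a\lhl b\}$ coincides with the same set computed inside $\lbar{F}$, and since the only edges of $F$ absent from $\lbar{F}$ are those joining $r$ to the roots of the trees of $\lbar{F}$, these edges are invisible to the corresponding induced subgraph; thus $B_a^{(F)}=B_a^{(\lbar{F})}$. For the other tensor factor, $\{b\in V(F)\mid b\lhl a\}=\{r\}\cup V\!\left(R_a^{(\lbar{F})}\right)$. At this point I would invoke the structural fact — available from the properties of $\leqh$, $\leql$ and $\leqhl$ established by Foissy~\mcite{Fo1,Fo2} — that within each tree $T_i$ of $\lbar{F}$ the vertex set of $R_a^{(\lbar{F})}$ is closed under passing to $\leqh$-predecessors, so that every connected component of $R_a^{(\lbar{F})}$ is a subtree containing the root of the tree $T_i$ in which it lies. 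Adjoining $r$ and the edges of $F$ from $r$ to these roots then realizes $R_a^{(F)}$ as precisely the tree obtained by grafting the components of $R_a^{(\lbar{F})}$ onto a new root decorated by $\sigmaup$, i.e. $R_a^{(F)}=B^+\!\left(R_a^{(\lbar{F})}\right)$, the empty case being covered by the convention $B^+(\etree)=\bullet_{\sigmaup}$.

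Putting the two cases together and recognizing the sum over $V(\lbar{F})$ by Eq.~(\mref{eq:eqcombint}) applied to $\lbar{F}$,
\[
\col(F)=\lbar{F}\ot\etree+\sum_{a\in V(\lbar{F})}B_a^{(\lbar{F})}\ot B^+\!\left(R_a^{(\lbar{F})}\right)=\lbar{F}\ot\etree+(\id\ot B^+)\col(\lbar{F}),
\]
which is Eq.~(\mref{eq:eqcomb}). I expect the single genuine obstacle to be the structural claim just used — that $R_a$, viewed inside one tree, is a ``lower-left'' subforest which always drags along the root of the tree it meets — since that is where one must argue with the planar layout and the interplay between $\leqh$ and $\leql$; the remaining steps are routine bookkeeping about induced subgraphs together with the fact that $B^+$ merely attaches a $\sigmaup$-decorated root below a forest.
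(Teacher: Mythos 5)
Your proof is correct and follows essentially the same route as the paper's: split the sum in Eq.~(\mref{eq:eqcombint}) into the root term, which gives $\lbar{F}\ot 1$, and the terms over $V(\lbar{F})$, for which $B_a$ is unchanged and $R_a$ acquires the new root so that $R_a^{(F)}=B^+\bigl(R_a^{(\lbar F)}\bigr)$. The only difference is organizational --- the paper decomposes $\lbar{F}=T_1\cdots T_n$ and reassembles via Lemma~\mref{le:lelip}, while you work with $\lbar{F}$ globally --- and you make explicit the down-set property of $R_a$ under $\leqh$ that the paper compresses into the remark that ``$R_a$ has the root of $F$.''
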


\begin{proof}
If $\lbar{F}=1$, then $F = \bullet_\sigmaup$ and $\col(F) = \col(\bullet_\sigmaup)=1\ot 1$ by Eq.~(\mref{eq:eqcombint}).
Since $\col(\lbar{F})= \col(\etree) = 0$ by Eq.~(\mref{eq:eqinitial}), Eq.~(\mref{eq:eqcomb}) is valid for $\lbar{F}=1$.

Consider $\lbar{F}\neq 1$. Then $\lbar{F}=T_{1}\cdots T_{n}$ for some $T_{1}, \cdots,  T_{n}\in \calt_\ell(\tx)\setminus \{\etree\}$ with $n\geq 1$. We have
\begin{align*}
\col(F)=&\,\sum_{a\in V(F)}B_{a}\ot R_{a}=\,\lbar{F}\ot 1+ \sum_{a\in V (\lbar{F})}B_{a}\ot R_{a}\\
=&\,\lbar{F}\ot 1+ \sum_{i=1}^n \sum_{a\in V(T_{i})}B_{a}\ot R_{a}\\
=&\,\lbar{F}\ot 1+ \sum_{i=1}^n \sum_{a\in V(T_{i})} (T_1\cdots T_{i-1}B'_{a}) \ot B^+(R_a' T_{i+1}\cdots T_n) \\
=&\,\lbar{F}\ot 1+ (\id \ot B^+) \left( \sum_{i=1}^n \sum_{a\in V(T_{i})} (T_1\cdots T_{i-1}B'_{a}) \ot (R_a' T_{i+1}\cdots T_n)\right) \\
=&\,\lbar{F}\ot 1+ (\id \ot B^+) \left( \sum_{i=1}^n \sum_{a\in V(T_{i})} (T_1\cdots T_{i-1})\cdot (B'_{a} \ot R_a')\cdot  (T_{i+1}\cdots T_n)\right) \\
=&\,\lbar{F}\ot 1+ (\id \ot B^+) \left( \sum_{i=1}^n (T_1\cdots T_{i-1})\cdot \left(\sum_{a\in V(T_{i})}B'_{a} \ot R_a' \right)\cdot  (T_{i+1}\cdots T_n)\right) \\
=&\,\lbar{F}\ot 1+(\id\ot B^+)\left(\sum_{i = 1}^n \left(T_{1}\cdots T_{i-1}\right)\cdot\col(T_{i})\cdot\left( T_{i+1}\cdots T_{n}\right)\right)\quad\left(\text{by Eq.~(\mref{eq:eqcombint}})\right)\\
=&\,\lbar{F}\ot 1+(\id\ot B^+)\col(T_{1}\cdots T_{n})\quad(\text{by the repetition of Lemma~\mref{le:lelip}})\\
=&\,\lbar{F}\ot 1+(\id\ot B^+)\col(\lbar{F}),
\end{align*}
where $B'_a$ (resp. $R'_a$) is the intersection of $B_a$ (resp. $R_a$) with $T_i$, and the fourth step employs the fact
that $R_a$ has the root of $F$. This completes the proof.
\end{proof}

\begin{prop}
The combinatorial description of $\col$ given in Eq.~(\mref{eq:eqcombint}) coincides with the recursive definition of $\col$ in Subsection~\mref{subs:copro}.
\end{prop}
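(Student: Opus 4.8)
The plan is to show that the combinatorial $\col$ of Eq.~(\ref{eq:eqcombint}) and the recursive $\col$ of Subsection~\ref{subs:copro} are two solutions of one and the same recursion, and therefore coincide. The recursive definition is pinned down by exactly three pieces of data: the initial values $\col(\etree) = 0$ and $\col(\bullet_x) = \etree \ot \etree$ for $x \in X$; the $\epsilon$-cocycle identity $\col B^+(\lbar{F}) = \lbar{F} \ot \etree + (\id \ot B^+)\col(\lbar{F})$ of Eq.~(\ref{eq:dbp}), applied to forests of breadth one and positive depth; and the Leibniz-type identity $\col(T_1 \cdots T_n) = T_1 \cdot \col(T_2 \cdots T_n) + \col(T_1) \cdot (T_2 \cdots T_n)$ of Eqs.~(\ref{eq:dele}) and~(\ref{eq:dele1}), applied to forests of breadth $n \geq 2$. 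These rules determine $\col(F)$ uniquely, by recursion on $\dep(F)$ and, within a fixed depth, on $\bre(F)$; so it suffices to check that the combinatorial $\col$ obeys all three.

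First I would dispose of the initial step, already recorded in Eq.~(\ref{eq:eqinitial}): the induced subgraph on the empty vertex set is empty, whence $\col(\etree) = 0$; and for $F = \bullet_x$ the unique vertex $a$ has $B_a = R_a = \etree$, so $\col(\bullet_x) = \etree \ot \etree$. The Leibniz-type rule for breadth $n \geq 2$ is precisely Lemma~\ref{le:lelip}, whose proof splits $V(T_1 \cdots T_n)$ as the disjoint union $V(T_1) \sqcup V(T_2 \cdots T_n)$ and records how $B_a$ and $R_a$ meet the two sub-forests. The $\epsilon$-cocycle identity for a tree $F = B^+(\lbar{F})$ of positive depth is Lemma~\ref{le:lecocycle}: the term indexed by the root of $F$ contributes $\lbar{F} \ot \etree$, while for every other vertex $a$ the forest $R_a$ contains the root of $F$, hence lies in the image of $B^+$, and the remaining sum over $V(\lbar{F})$ collapses to $(\id \ot B^+)\col(\lbar{F})$ after one more appeal to Lemma~\ref{le:lelip}.

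Granting these three facts, the coincidence of the two descriptions follows by a routine induction: on $\dep(F)$ as the outer parameter and on $\bre(F)$ as the inner one, mirroring exactly the structure of Subsection~\ref{subs:copro}; $\bfk$-linearity then extends the equality from $\ldf(\tx)$ to all of $\hlf(\tx)$. I do not anticipate any real difficulty here; the only point worth a little care is to make explicit the well-founded order (depth first, breadth second) underlying the recursion, so that the phrase ``the recursively defined $\col$ is unique'' is fully justified — but this is already implicit in the way the recursive definition was set up, and the substantive combinatorial work is exactly what Lemmas~\ref{le:lelip} and~\ref{le:lecocycle} supply.
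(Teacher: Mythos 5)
Your proposal is correct and follows essentially the same route as the paper: the paper's own proof consists precisely of citing Eq.~(\mref{eq:eqinitial}) for the initial values together with Lemmas~\mref{le:lelip} and~\mref{le:lecocycle} for the two recursive clauses, and concluding by uniqueness of the solution to the recursion on depth and breadth. You merely make explicit the well-founded induction that the paper leaves implicit, which is a reasonable elaboration rather than a different argument.
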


\begin{proof}
It follows from Eq.~(\mref{eq:eqinitial}) and Lemmas~\mref{le:lelip}, \mref{le:lecocycle}.
\end{proof}

\subsubsection{Infinitesimal unitary bialgebras}
In order to provide an algebraic framework for the calculus of divided differences,
Joni and Rota~\mcite{JR} introduced the concept of an infinitesimal bialgebra. We adapt it to the following unitary version.

\begin{defn}
An {\bf infinitesimal unitary bialgebra} (abbreviated $\epsilon$-unitary bialgebra) is a quadruple $(A,m,1,\Delta)$,
where $(A,m,1)$ is a unitary associative algebra, $(A,\Delta)$ is a coassociative coalgebra (without counit), and for each $a,b\in A$,
\begin{equation}
\Delta (ab)=a\cdot \Delta(b)+\Delta(a) \cdot b.
\mlabel{eq:cocycle}
\end{equation}
\end{defn}

Here $(A,\Delta)$ cannot have the counit,
since there is no non-zero infinitesimal bialgebra which is both unitary and counitary~\mcite{Ag0}.

\begin{defn}
Let $(A,m_{A},1_{A},\Delta_{A})$ and  $(B,m_{B},1_{B},\Delta_{B})$ be $\epsilon$-unitary bialgebras.
A map $\psi : A\rightarrow B$ is called an {\bf infinitesimal unitary bialgebra morphism} (abbreviated $\epsilon$-unitary bialgebra morphism) if
$\psi$ is a unitary algebra morphism and a coalgebra morphism.
\end{defn}

Now we arrive at our main result in this subsection.

\begin{theorem}
The quadruple $(\hlf(\tx),\, \conc, \etree, \,\col )$ is an  $\epsilon$-unitary bialgebra.
\mlabel{thm:rt2}
\end{theorem}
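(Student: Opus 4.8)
The strategy is simply to assemble the three ingredients in the definition of an $\epsilon$-unitary bialgebra, each of which has essentially been established above. First I would observe that $(\hlf(\tx),\conc,\etree)$ is a unitary associative algebra: by construction $\hlf(\tx) = \bfk M(\calt_\ell(\tx))$ is the monoid algebra of the free monoid $M(\calt_\ell(\tx))$ on the set $\calt_\ell(\tx)$ of decorated planar rooted trees, so concatenation $\conc$ is associative and the empty tree $\etree = 1$ is a two-sided unit; moreover $\hlf(\tx)$ is closed under $\conc$ by the cited result from~\mcite{ZGG}. Second, $(\hlf(\tx),\col)$ is a coassociative coalgebra (without counit) by Theorem~\mref{thm:rt1}, and $\col$ indeed lands in $\hlf(\tx)\ot\hlf(\tx)$ by Lemma~\mref{lem:dethh}.

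It then remains to verify the infinitesimal (Leibniz-type) compatibility $\col(ab) = a\cdot\col(b) + \col(a)\cdot b$ for all $a,b\in\hlf(\tx)$, where the bimodule actions are those of Eq.~(\mref{eq:dota}). Since both sides are $\bfk$-bilinear in $(a,b)$, it suffices to check this on basis elements $a = F_1$, $b = F_2$ with $F_1,F_2\in\ldf(\tx)$, and this is precisely the content of Lemma~\mref{lem:colff}. Extending by bilinearity gives Eq.~(\mref{eq:cocycle}) in general.

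Putting these together shows $(\hlf(\tx),\conc,\etree,\col)$ satisfies all the axioms of an $\epsilon$-unitary bialgebra. There is no real obstacle here, as all the substantive work—closure under $\col$, coassociativity, and the Leibniz identity—was carried out in Lemmas~\mref{lem:rt11}, \mref{lem:colff}, \mref{lem:dethh} and Theorem~\mref{thm:rt1}; the only point requiring a word of care is the passage from basis elements to arbitrary elements via bilinearity of the bimodule actions in Eq.~(\mref{eq:dota}), which is routine.
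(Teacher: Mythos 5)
Your proposal is correct and follows essentially the same route as the paper: cite the unitary algebra structure of $(\hlf(\tx),\conc,\etree)$, invoke Theorem~\mref{thm:rt1} for coassociativity of $\col$, and obtain the compatibility Eq.~(\mref{eq:cocycle}) from Lemma~\mref{lem:colff} by bilinearity. The only cosmetic difference is that you justify the algebra structure directly from the monoid-algebra construction rather than citing \cite[Theorem~2.8]{ZGG}, which is equally valid.
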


\begin{proof}
It is known that the triple $(\hlf(\tx), \,\mul,\etree)$ is a unitary algebra~\cite[Theorem~2.8]{ZGG}.
Furthermore, $(\hlf(\tx),\, \col )$ is a coalgebra by Theorem~\mref{thm:rt1}.
Finally Eq.~(\mref{eq:cocycle}) follows from Lemma~\mref{lem:colff}.
This completes the proof.
\end{proof}

\subsection{Free cocycle infinitesimal unitary bialgebras}
In this subsection, we construct the  free cocycle infinitesimal unitary bialgebra on a set. For this, let us pose the following concepts.
\begin{defn}
\begin{enumerate}
\item  An  {\bf operated  infinitesimal unitary bialgebra} (abbreviated operated $\epsilon$-unitary bialgebra) $(H,\,m,\,1, \Delta, P)$ is an $\epsilon$-unitary bialgebra  $(H,\,m,\,1, \Delta)$ which is also an operated algebra $(H,\,P)$.

\item Let $(A,\,P_A)$ and $(B,\,P_B)$ be two operated $\epsilon$-unitary bialgebras.
A map $\psi : A\rightarrow B$ is called an {\bf operated infinitesimal unitary bialgebra morphism} (abbreviated operated $\epsilon$-unitary bialgebra morphism)
if $\psi$ is an $\epsilon$-unitary bialgebra morphism and $\psi P_A = P_B \psi$.

\item A {\bf cocycle infinitesimal unitary bialgebra} (abbreviated cocycle $\epsilon$-unitary bialgebra) is an operated  $\epsilon$-unitary bialgebra
$(H,\,m,\,1, \Delta, P)$  satisfying the $\epsilon$-cocycle condition:
\begin{equation}
\Delta P = \id \otimes 1+ (\id\otimes P) \Delta.
\mlabel{eq:eqiterated}
\end{equation}

\item The {\bf free cocycle $\epsilon$-unitary bialgebra on a set $X$} is a cocycle $\epsilon$-unitary bialgebra
$(H_{X},\,m_{X},\\ \,1_X, \Delta_{X}, \,P_X)$ together with a set map $j_X:X\to H_{X}$ with the property that,
for any cocycle $\epsilon$-unitary bialgebra $(H,\,m,\,1, \Delta,\,P)$ and set map $f:X\to H$
such that $\Delta(f(x))=1\ot 1$ for $x\in X$, there is a unique morphism $\free{f}:H_X\to H$ of
operated $\epsilon$-unitary bialgebras such that $\free{f} j_X=f$.
\mlabel{it:def4}
\end{enumerate}
\mlabel{de:defciub}
\end{defn}

\begin{theorem} Let $j_{X}: X\hookrightarrow \hlf(\tx), ~x \mapsto \bullet_x$ be the natural embedding.
Then the quintuple $(\hlf(\tx), \,\mul,\,1, \col,\,B^+)$ together with $j_X$ is the free cocycle $\epsilon$-unitary bialgebra on the set $X$.
\mlabel{thm:propm}
\end{theorem}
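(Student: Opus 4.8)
The plan is to establish the universal property directly by constructing $\free{f}$ and checking it is the unique operated $\epsilon$-unitary bialgebra morphism over $f$. First I would recall from \mcite{ZGG} that $(\hlf(\tx),\,\mul,\,1,\,B^+)$ together with $j_X$ is already the \emph{free operated unitary algebra} on $X$: for any operated unitary algebra $(H,m,1,P)$ and any set map $f:X\to H$, there is a unique operated unitary algebra morphism $\free{f}:\hlf(\tx)\to H$ with $\free{f}j_X=f$. So given a cocycle $\epsilon$-unitary bialgebra $(H,m,1,\Delta,P)$ and a set map $f:X\to H$ with $\Delta(f(x))=1\ot 1$ for all $x\in X$, this already pins down $\free{f}$ uniquely \emph{as an operated algebra morphism}, and guarantees $\free{f}\,\mul=m(\free{f}\ot\free{f})$, $\free{f}(1)=1$, and $\free{f}B^+=P\free{f}$. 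The only thing left to prove is that this same $\free{f}$ is automatically a coalgebra morphism, i.e.
\begin{equation*}
(\free{f}\ot\free{f})\col(F)=\Delta(\free{f}(F))\quad\text{for all }F\in\ldf(\tx).
\end{equation*}

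The hard part will be verifying this coalgebra compatibility, and I expect to do it by the same double induction used throughout the section: outer induction on $\dep(F)$, inner induction on $\bre(F)$. For the base case $\dep(F)=0$ we have $F=\bullet_{x_1}\cdots\bullet_{x_m}$; using Lemma~\mref{lem:rt11} on the left and, on the right, the fact that $\Delta$ is a derivation in the sense of Eq.~(\mref{eq:cocycle}) together with $\free{f}$ being an algebra morphism and $\Delta(\free{f}(\bullet_{x_i}))=\Delta(f(x_i))=1\ot1$, both sides unfold to $\sum_{i=1}^m \free{f}(\bullet_{x_1}\cdots\bullet_{x_{i-1}})\ot\free{f}(\bullet_{x_{i+1}}\cdots\bullet_{x_m})$. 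For the inductive step with $\bre(F)=1$, write $F=B^+(\lbar F)$; apply $\col B^+=\id\ot1+(\id\ot B^+)\col$ (Eq.~(\mref{eq:dbp})) on the left and the $\epsilon$-cocycle condition $\Delta P=\id\ot1+(\id\ot P)\Delta$ (Eq.~(\mref{eq:eqiterated})) on the right, then push $\free{f}\ot\free{f}$ through using $\free{f}B^+=P\free{f}$, $\free{f}(1)=1$, and the induction hypothesis applied to $\lbar F$ (which has strictly smaller depth). For the inductive step with $\bre(F)=m+1\geq2$, write $F=F_1F_2$ with each factor of smaller breadth; use Lemma~\mref{lem:colff} on the left and Eq.~(\mref{eq:cocycle}) on the right, and finish via the induction hypothesis on $F_1$ and $F_2$ together with the bimodule action compatibilities in Eq.~(\mref{eq:dota}) (namely $\free{f}(a\cdot u)=\free{f}(a)\cdot\free{f}(u)$ for the $A$-bimodule structure on $A\ot A$, which follows since $\free{f}$ is an algebra morphism). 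This is exactly parallel to the coassociativity proof of Theorem~\mref{thm:rt1}, with $\Delta$, $P$, $m$ on $H$ playing the roles that $\col$, $B^+$, $\mul$ played there.

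Finally, for uniqueness: any operated $\epsilon$-unitary bialgebra morphism $\phi:\hlf(\tx)\to H$ with $\phi j_X=f$ is in particular an operated unitary algebra morphism over $f$, hence equals $\free{f}$ by the universal property of the free operated unitary algebra recalled above. Assembling these pieces—$\free{f}$ is a unitary algebra morphism and commutes with the operators by the operated-algebra universal property, $\free{f}$ is a coalgebra morphism by the double induction, and uniqueness is inherited from the operated-algebra level—gives the theorem. The main obstacle is purely the bookkeeping in the $\bre(F)=1$ and $\bre(F)\geq2$ inductive steps, where one must carefully match the Sweedler-type expansions on both sides; there is no conceptual difficulty beyond what is already present in the proof of Theorem~\mref{thm:rt1}.
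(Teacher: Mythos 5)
Your proposal is correct and follows essentially the same route as the paper's proof: invoke the freeness of $(\hlf(\tx),\,\mul,\,1,\,B^+)$ as an operated unitary algebra from \cite{ZGG} to obtain and uniquely determine $\free{f}$, then verify $\Delta\free{f}=(\free{f}\ot\free{f})\col$ by the same double induction on depth and breadth, using Lemma~\ref{lem:rt11}, Eq.~(\ref{eq:dbp}) against Eq.~(\ref{eq:eqiterated}), and Lemma~\ref{lem:colff} in the three cases exactly as the paper does. The only (minor) omission is the explicit remark that the quintuple is itself a cocycle $\epsilon$-unitary bialgebra, which follows immediately from Theorem~\ref{thm:rt2} and Eq.~(\ref{eq:dbp}).
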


\begin{proof}
The quadruple $(\hlf(\tx), \,\mul,\,1, \col)$ is an $\epsilon$-unitary bialgebra by Theorem~\mref{thm:rt2},
and further, equipped with the operator $B^+$, is a cocycle $\epsilon$-unitary bialgebra by Eq.~(\mref{eq:dbp}).

Let $(H,\, m,\,1_H, \Delta,\, P)$ be a cocycle $\epsilon$-unitary bialgebra and let $f:X\to H$ be a map such that $\Delta(f(x))=1\ot 1$ for $x\in X$.
Then $(H,m,1_H, P)$ is an operated unitary algebra.
Note that $(\hlf(\tx), \,\mul,\,1_H,\, B^+)$ is the free operated unitary algebra on $X$~\mcite{ZGG}.
So there is a unique operated unitary algebra morphism $\free{f}:\hlf(\tx) \to H$ such that $\free{f} j_X=f$.
It remains to check the compatibility of the coproducts $\Delta$ and $\col$ for which we verify
\begin{equation}
\Delta \free{f} (F)=(\free{f}\ot \free{f}) \col (F)\quad \text{for all } F\in \calf_\ell(\tx),
\mlabel{eq:copcomp}
\end{equation}
by induction on the depth $\dep(F)\geq 0$. If $\dep(F)=0$, we have $F = \bullet_{x_1} \cdots \bullet_{x_m}$ for some $m\geq 0$ and $x_1, \cdots, x_m\in X$.
Then
\allowdisplaybreaks{
\begin{align*}
\Delta\free{f}(\bullet_{x_1} \cdots \bullet_{x_m})=&\ \Delta \Big( \free{f}(\bullet_{x_1})\cdots\free{f}(\bullet_{x_{m}}) \Big)\\
=&\ \sum_{i=1}^m \Big(\free{f}(\bullet_{x_{1}})\cdots\free{f}(\bullet_{x_{i-1}})\Big)\cdot
\Delta\big(\free{f}(\bullet_{x_{i}})\big) \cdot \Big(\free{f}(\bullet_{x_{i+1}})\cdots\free{f}(\bullet_{x_{m}})\Big)
\quad \text{(by Eq.~(\ref{eq:cocycle}))}\\
=&\ \sum_{i=1}^m \Big(\free{f}(\bullet_{x_{1}})\cdots\free{f}(\bullet_{x_{i-1}})\Big)\cdot
\Delta\big(f(x_{i})\big) \cdot \Big(\free{f}(\bullet_{x_{i+1}})\cdots\free{f}(\bullet_{x_{m}})\Big)\\
=&\ \sum_{i=1}^m \Big(\free{f}(\bullet_{x_{1}})\cdots\free{f}(\bullet_{x_{i-1}})\Big)\cdot
(1\ot 1)\cdot \Big(\free{f}(\bullet_{x_{i+1}})\cdots\free{f}(\bullet_{x_{m}})\Big)\\
=&\ \sum_{i=1}^m\left(\free{f}(\bullet_{x_{1}})\cdots\free{f}(\bullet_{x_{i-1}})\right)\ot\left(\free{f}(\bullet_{x_{i+1}}) \cdots\free{f}(\bullet_{x_{m}})\right)\\
=&\ \sum_{i=1}^m\free{f}(\bullet_{x_{1}}\cdots\bullet_{x_{i-1}})\ot\free{f}(\bullet_{x_{i+1}} \cdots\bullet_{x_{m}})\\
=&\ \sum_{i=1}^m (\free{f}\ot\free{f})(\bullet_{x_{1}}\cdots \bullet_{x_{i-1}}\ot \bullet_{x_{i+1}} \cdots \bullet_{x_{m}})\\
=&\ (\free{f}\ot\free{f}) \left(\sum_{i=1}^m \bullet_{x_{1}}\cdots \bullet_{x_{i-1}}\ot \bullet_{x_{i+1}} \cdots \bullet_{x_{m}}\right)\\
=&\ (\free{f}\ot\free{f})\col(\bullet_{x_1} \cdots \bullet_{x_m})\quad(\text{by Lemma~\ref{lem:rt11}}).
\end{align*}
}

Assume that Eq.~(\mref{eq:copcomp}) holds for $\dep(F)\leq n$ and consider the case of $\dep(F)=n+1$. For this case we apply the induction on the breadth $\bre(F)\geq 1$. If $\bre(F)=1$, since $\dep(F)=n+1\geq1$, we have $F=B^+(\lbar{F})$ for some
$\lbar{F}\in\ldf(\tx)$.  Then
\allowdisplaybreaks{
\begin{align*}
\Delta \free{f}(F)&=\Delta \free{f} (B^{+}(\lbar{F}))=\Delta P(\free{f} (\lbar{F}))\\
&=\free{f}(\lbar{F})\ot 1_{H}+ (\id\ot P)\Delta(\free{f} (\lbar{F}))\quad(\text{by Eq.~(\mref{eq:eqiterated})})\\
&=\free{f}(\lbar{F})\ot 1_{H}+ (\id\ot P)(\free{f}\ot \free{f}) \col (\lbar{F}) \quad(\text{by the induction hypothesis on~}\dep(F)) \\
&=\free{f}(\lbar{F})\ot 1_{H}+ (\free{f}\ot P\free{f}) \col (\lbar{F})\\
&=\free{f}(\lbar{F})\ot 1_{H}+ (\free{f}\ot \free{f}B^+) \col (\lbar{F}) \quad(\text{by $\lbar{f}$ being an operated algebra morphism}) \\
&=(\free{f}\ot \free{f})\Big(\lbar{F}\ot 1+(\id\ot B^+)\col (\lbar{F})\Big) \\
&=(\free{f}\ot \free{f}) \col (B^+(\lbar{F}))\\
&=(\free{f}\ot \free{f}) \col (F).
\end{align*}
}
Assume that Eq.~(\mref{eq:copcomp}) holds for $\dep(F)=n+1$ and $\bre(F)\leq m$, and consider the case when $\dep(F)=n+1$ and $\bre(F)=m+1\geq 2$. Then $F=F_{1}F_{2}$ for some $F_{1},F_{2}\in\ldf(\tx)$ with $\bre(F_{1}), \bre(F_{2}) < \bre(F)$. Then
\allowdisplaybreaks{
\begin{align*}
\Delta \free{f}(F)=&\ \Delta \free{f} (F_{1}F_{2})=\Delta(\free{f}(F_1)\free{f}(F_2))\\
=&\ \free{f}(F_{1})\cdot \Delta (\free{f}(F_{2}))+\Delta(\free{f}(F_{1})) \cdot \free{f}(F_{2})\\
=&\ \free{f}(F_{1}) \cdot (\free{f}\ot \free{f})\col(F_{2})+(\free{f} \ot \free{f})\col(F_{1}) \cdot \free{f}(F_{2}) \quad (\text{by the induction on breadth})\\
=&\ \free{f}(F_{1}) \cdot (\free{f}\ot \free{f})\Big(\sum_{(F_{2})}F_{2(1)}\ot F_{2(2)}\Big)+(\free{f}\ot \free{f})\Big(\sum_{(F_{1})}F_{1(1)}\ot F_{1(2)}\Big) \cdot \free{f}(F_{2})\\
=&\ \free{f}(F_{1})\cdot \Big(\sum_{(F_{2})}\free{f}(F_{2(1)})\ot \free{f}(F_{2(2)})\Big)+\Big(\sum_{(F_{1})}\free{f}(F_{1(1)})\ot\free{f} (F_{1(2)})\Big) \cdot \free{f}(F_{2})\\
=&\ \sum_{(F_{2})}\free{f}(F_{1})\free{f}(F_{2(1)})\ot\free{f}(F_{2(2)})+\sum_{(F_{1})}\free{f}(F_{1(1)})\ot\free{f} (F_{1(2)})\free{f}(F_{2}) \quad (\text{by Eq.~(\ref{eq:dota})})\\
=&\ \sum_{(F_{2})}\free{f}(F_{1}F_{2(1)})\ot\free{f}(F_{2(2)})+\sum_{(F_{1})}\free{f}(F_{1(1)})\ot\free{f} (F_{1(2)}F_{2})\\
=&\ (\free{f}\ot \free{f})\Big(\sum_{(F_{2})}F_{1}F_{2(1)}\ot F_{2(2)}\Big)+(\free{f}\ot \free{f})\Big( \sum_{(F_{1})}F_{1(1)}\ot F_{1(2)}F_{2}\Big)\\
=&\ (\free{f}\ot \free{f})\left(\sum_{(F_{2})}F_{1}F_{2(1)}\ot F_{2(2)}+\sum_{(F_{1})}F_{1(1)}\ot F_{1(2)}F_{2} \right)\\
=&\ (\free{f}\ot \free{f})\left(F_{1}\cdot \sum_{(F_{2})}F_{2(1)}\ot F_{2(2)}+ \Big(\sum_{(F_{1})}F_{1(1)}\ot F_{1(2)}\Big) \cdot F_{2} \right)\\
=&\ (\free{f} \ot \free{f})(F_{1} \cdot \col(F_{2})+\col(F_{1})\cdot F_{2})\\
=&\ (\free{f}\ot \free{f})\col(F_{1}F_{2})  \quad(\text{by Lemma~\mref{lem:colff}})\\
=&\ (\free{f}\ot \free{f})\col(F).
\end{align*}
}
This completes the induction on the depth and hence the induction on the breadth.
\end{proof}

\section{Free cocycle infinitesimal unitary Hopf algebras of decorated planar rooted forests}\label{sec:hopf}
In the last section, we have proved that $\hlf(\tx)$ is the free cocycle $\epsilon$-unitary bialgebra on a set $X$.
In this section, we are going to show that it is further an $\epsilon$-unitary Hopf algebra and
then the free cocycle $\epsilon$-unitary Hopf algebra on a set $X$.
Throughout the remainder of the paper, we assume that $\bfk$ is a field with ${\rm char}(\bfk) =0$ and denote by $\Hom_{\bfk}(A,B)$ the set of linear map from $A$ to $B$.

The concept of an infinitesimal Hopf algebra was introduced by Aguiar in order to develop and study $\epsilon$-bialgebras~\mcite{Ag0}.
If $A$ is an $\epsilon$-bialgebra, then the space $\Hom_{\bfk}(A,A)$ is still an algebra under convolution:
 $$f\ast g=m(f\ot g)\Delta,$$
but it possibly without unit with respect to the convolution $\ast$~\mcite{Ag0}. So it is impossible to consider antipode.
To solve this difficulty, Aguiar equipped the space $\Hom_{\bfk}(A,A) $ with circular convolution $\circledast$ given by
$$f\circledast g :=f\ast g+f+g, \, \text {that is, } \, (f\circledast g)(a) :=\sum_{(a)}f(a_{(1)})g({a_{(2)}})+f(a)+g(a)\, \text{ for } a\in A.$$
Note that $f\circledast 0 = f = 0\circledast f$ and so $0\in \Hom_{\bfk}(A,A)$ is the unit with respect to the circular convolution $\circledast$.

Now we propose a unitary version of an infinitesimal Hopf algebra.

\begin{defn}
An infinitesimal unitary bialgebra $(A, m, 1, \Delta)$ is called an {\bf infinitesimal unitary Hopf algebra} (abbreviated {\bf $\epsilon$-unitary Hopf algebra}) if the identity map $\id\in \Hom_{\bfk}(A,A)$ is invertible with respect to the circular convolution. In this case, the inverse $S\in \Hom_{\bfk}(A,A)$ of $\id$ is called the {\bf antipode} of $A$. It is characterized by
the equations
\begin{equation}
\sum_{(a)}S(a_{(1)})a_{(2)}+S(a)+a=0=\sum_{(a)}a_{(1)}S(a_{(2)})+S(a)+a \, \text{ for }\, a \in A.
\mlabel{eq:eqha}
\end{equation}
where $\Delta(a) = \sum_{(a)} a_{(1)} \ot a_{(2)}$.
\mlabel{de:deha}
\end{defn}

\begin{remark}
Note $\Delta(1)=0$ by Eq.~(\mref{eq:cocycle}). In Eqs.~(\mref{eq:eqha}), we take $a=1$ to obtain
$S(1)=-1$.
\end{remark}

\begin{defn}
Let $(H, m_{H}, 1_{H},\Delta_{H})$ and $(L, m_{L}, 1_{L}, \Delta_{L})$ be $\epsilon$-unitary Hopf algebras,
and let $S_{H}$ and $S_{L}$ be the antipodes of $H$ and $L$, respectively. When an $\epsilon$-unitary bialgebra
morphism $\phi: H\rightarrow L$ satisfies the condition
$S_{L} \phi=\phi S_{H}$, $\phi$ is called an {\bf $\epsilon$-unitary Hopf algebra morphism}.
\end{defn}

\begin{exam}
Aguiar~\mcite{Ag0} verified that the polynomial algebra $\bfk[x]$ is an $\epsilon$-bialgebra satisfying
 \begin{align*}
\Delta(1)=0,\,\,\Delta(x^{n})=x^{n-1}\ot 1+x^{n-2}\ot x+\cdots+x\ot x^{n-2}+1\ot x^{n-1}\,\, \text{for}\,\,n\geq 1,
\end{align*}
and further an $\epsilon$-Hopf algebra with the antipode $S$ given by
\begin{align*}
S(x^{n})=-(x-1)^{n}\,\, \text{for}\,\,n\geq 0.
\end{align*}
Involved with the unity, it is also an $\epsilon$-unitary bialgebra and an $\epsilon$-unitary Hopf algebra.
\end{exam}

Based on Definitions~\mref{de:defciub} and~\mref{de:deha}, we pose the following concepts.

\begin{defn}
\begin{enumerate}
\item  An  {\bf operated  infinitesimal unitary Hopf algebra} (abbreviated operated $\epsilon$-unitary Hopf algebra) $(H,\,m,\,1, \Delta, P)$ is an $\epsilon$-unitary Hopf algebra $(H,\,m,\,1, \Delta)$ which is also an operated algebra $(H,\,P)$.

\item
Let $(H, m_{H}, 1_{H},\Delta_{H}, P_{H})$ and $(L, m_{L}, 1_{L}, \Delta_{L}, P_{L})$ be two operated $\epsilon$-unitary Hopf algebras. A map $\phi: H\rightarrow L$ is called an {\bf operated infinitesimal unitary Hopf algebra morphism} (abbreviated operated $\epsilon$-unitary Hopf algebra morphism) if $\phi$ is an
$\epsilon$-unitary Hopf algebra morphism and $\phi P_{H}=P_{L}\phi$.

\item
If the cocycle $\epsilon$-unitary bialgebra is an $\epsilon$-unitary Hopf algebra, then it is called a {\bf cocycle infinitesimal unitary Hopf algebra} (abbreviated cocyle $\epsilon$-unitary Hopf algebra).
\item
The {\bf free cocyle $\epsilon$-unitary Hopf algebra on a set $X$} is a cocycle $\epsilon$-unitary Hopf algebra $(H_{X},\,m_{X},\,1_X, \Delta_{X}, \,P_X)$ together with a set map $j_X:X\to H_{X}$ with the property that, for any cocycle $\epsilon$-unitary Hopf algebra $(H,\,m,\,1, \Delta,\,P)$ and set map $f:X\to H$ such that $\Delta(f(x))=1\ot 1$ for $x\in X$, there is a unique morphism $\free{f}:H_X\to H$ of operated $\epsilon$-unitary Hopf algebras such that $\free{f} j_X=f$.
\end{enumerate}
\mlabel{defn:ciuha}
\end{defn}

We expose some notations and results as preparation.

\begin{defn}\cite[p.~11]{Ag0}
Let $A$ be an algebra and $C$ a coalgebra. The map $f:C\to A$ is called {\bf locally nilpotent} with respect to convolution $*$
if for each $c\in C$ there is some $n\geq 1$ such that
\begin{equation*}
f^{\ast n}(c) :=\sum_{(c)}f(c_{(1)})f(c_{(2)})\cdots f(c_{(n+1)})=0.
\end{equation*}
\end{defn}

Denote by $\mathbb{R}$ and $\mathbb{C}$ the field of real numbers and the field of complex numbers, respectively.

\begin{lemma}\cite[Proposition~4.5]{Ag0}
Let $(A,\,m,\,\Delta)$ be an $\epsilon$-bialgebra over a field $\bfk$ and $D :=m\Delta$. Suppose that either
\begin{enumerate}
\item
$\bfk = \mathbb{R}$ or $\mathbb{C}$ and $A$ is finite dimensional, or
\item
$D$ is locally nilpotent and char$(\bfk)=0$.
\end{enumerate}
Then $A$ is an $\epsilon$-Hopf algebra with bijective antipode $S=-\sum_{n=0}^{\infty}\frac{1}{n!}(-D)^{n}$.
\mlabel{lem:rt3}
\end{lemma}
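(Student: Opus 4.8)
The plan is to exhibit $S = -\sum_{n\geq 0}\frac{1}{n!}(-D)^{n}$ explicitly as a two-sided inverse of $\id$ with respect to the circular convolution $\circledast$ on $\Hom_{\bfk}(A,A)$, and then to produce an explicit linear inverse of $S$. Both reduce to one algebraic identity relating ordinary convolution powers of $\id$ to powers of $D=m\Delta$:
\begin{equation*}
\id^{\ast n}=\frac{1}{(n-1)!}\,D^{n-1}\quad(n\geq 1),\qquad\text{equivalently}\qquad D^{n}=n!\,\id^{\ast(n+1)}.
\end{equation*}
I would obtain this from the recursion $D\circ\id^{\ast n}=n\,\id^{\ast(n+1)}$, proved as follows: write $\id^{\ast n}(a)=\sum_{(a)}a_{(1)}\cdots a_{(n)}$ in iterated Sweedler notation (unambiguous by coassociativity); iterating the $\epsilon$-Leibniz identity (Eq.~(\mref{eq:leibniz})) gives
\begin{equation*}
\Delta\big(a_{(1)}\cdots a_{(n)}\big)=\sum_{i=1}^{n}(a_{(1)}\cdots a_{(i-1)})\cdot\Delta(a_{(i)})\cdot(a_{(i+1)}\cdots a_{(n)});
\end{equation*}
applying $m$ and summing over $(a)$, coassociativity collapses each of the $n$ summands to $\sum_{(a)}a_{(1)}\cdots a_{(n+1)}=\id^{\ast(n+1)}(a)$. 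Since $\id^{\ast 1}=\id=D^{0}$, the identity follows by induction.

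Next I would fix the meaning of the series. In case (2), local nilpotence of $D$ means $D^{n}(a)=0$ for all large $n$ (depending on $a$), so $S(a)$ is a finite sum, and $\mathrm{char}(\bfk)=0$ lets us divide by $n!$; the same applies to $T:=-\sum_{n\geq 0}\frac{1}{n!}D^{n}$. In case (1), $\End_{\bfk}(A)$ is finite dimensional over $\mathbb{R}$ or $\mathbb{C}$, so $\sum_{n}\|D\|^{n}/n!<\infty$ gives absolute convergence in the sub-multiplicatively normed algebra $\End_{\bfk}(A)$, and $S=-e^{-D}$, $T=-e^{D}$ are genuine operators; here $\mathrm{char}(\bfk)=0$ automatically, while the key identity above is purely algebraic, hence valid in both cases.

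Granting the identity, the antipode property is formal: from $D^{n}=n!\,\id^{\ast(n+1)}$ and $\id\ast\id^{\ast(n+1)}=\id^{\ast(n+2)}=\frac{1}{(n+1)!}D^{n+1}$ one gets $\id\ast D^{n}=\frac{1}{n+1}D^{n+1}$, hence
\begin{equation*}
\id\ast S=-\sum_{n\geq 0}\frac{(-1)^{n}}{n!}\,\id\ast D^{n}=-\sum_{n\geq 0}\frac{(-1)^{n}}{(n+1)!}D^{n+1}=\sum_{m\geq 1}\frac{(-D)^{m}}{m!}=-S-\id,
\end{equation*}
and symmetrically $S\ast\id=-S-\id$; the interchange of $\id\ast(-)$ with the sum is legitimate by pointwise finiteness in case (2) and by continuity of the linear map $\id\ast(-)$ on the finite dimensional space $\End_{\bfk}(A)$ in case (1). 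Therefore $\id\circledast S=\id\ast S+\id+S=0$ and likewise $S\circledast\id=0$, i.e.\ $\id$ is $\circledast$-invertible with inverse $S$, which is precisely the statement that $A$ is an $\epsilon$-Hopf algebra with antipode $S$, and it unwinds to the two identities in Eqs.~(\mref{eq:eqha}).

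For bijectivity I would compute the Cauchy product (valid pointwise in case (2), and by absolute convergence in case (1))
\begin{equation*}
S\circ T=\Big(\sum_{m\geq 0}\frac{(-D)^{m}}{m!}\Big)\Big(\sum_{n\geq 0}\frac{D^{n}}{n!}\Big)=\sum_{k\geq 0}\frac{D^{k}}{k!}\sum_{m=0}^{k}\binom{k}{m}(-1)^{m}=\sum_{k\geq 0}\frac{D^{k}}{k!}(1-1)^{k}=\id_{A},
\end{equation*}
and likewise $T\circ S=\id_{A}$, so $S$ is bijective with inverse $T$. I expect the only genuinely delicate point to be the key identity $\id^{\ast n}=\frac{1}{(n-1)!}D^{n-1}$: the bookkeeping with iterated Sweedler notation and the repeated appeals to coassociativity need care, whereas everything downstream is routine series manipulation together with the convergence/local-nilpotence remarks.
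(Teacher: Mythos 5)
The paper offers no proof of this lemma at all: it is quoted verbatim from Aguiar \cite[Proposition~4.5]{Ag0}, so there is nothing internal to compare against. Your argument is correct, and it is in substance Aguiar's own proof: the identity $\id^{\ast(n+1)}=\frac{1}{n!}D^{n}$ (via the recursion $D\circ\id^{\ast n}=n\,\id^{\ast(n+1)}$, which uses exactly that $\Delta$ is a derivation and that coassociativity collapses the iterated Sweedler components) is the content of Aguiar's preparatory proposition, and the remaining series manipulations for $\id\circledast S=S\circledast\id=0$ and for $S^{-1}=-e^{D}$ are the same formal exponential computation he performs. One small point worth making explicit: the paper's Definition~3.10 phrases local nilpotence in terms of convolution powers $D^{\ast n}$ (and Lemma~\mref{lem:rt12} verifies exactly that), whereas your convergence argument uses composition powers $D^{\circ n}$. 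These are equivalent here, since your key identity gives $D\ast D=\id^{\ast 4}=\frac{1}{3!}D^{\circ 3}$ and more generally $D^{\ast n}=\frac{1}{(2n-1)!}D^{\circ(2n-1)}$ in characteristic $0$, but a sentence recording that equivalence would make the proof self-contained against the hypothesis as the paper actually states it.
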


We proceed to prove that the map $m_{RT}\col$ on $\hlf(\tx)$ is locally nilpotent. For this, denote by
$$H^{n}:= H_\ell^{n}(\tx) := \bfk \left\{F\in \ldf(\tx) \medmid |F| = n \right\}\, \text{ for } n\geq 0,$$
where $|F|$ is the number of vertices of $F$.

\begin{lemma}
For each $F\in H^{n}$ with $n\geq 1$, we have $\col(F)\in \sum_{p+q=n-1}H^{p}\ot H^{q}.$
\mlabel{lem:rt122}
\end{lemma}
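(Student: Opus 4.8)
The plan is to read the statement off the combinatorial description of $\col$ in Eq.~(\mref{eq:eqcombint}), which by the preceding Proposition coincides with the recursive definition. For $F\in\ldf(\tx)$ with $|F|=n$ one has $\col(F)=\sum_{a\in V(F)}B_{a}\ot R_{a}$, where $B_{a}$ is the induced subgraph on $\{b\in V(F)\mid a\lhl b\}$ and $R_{a}$ is the induced subgraph on $\{b\in V(F)\mid b\lhl a\}$. These two vertex sets are disjoint and their union is exactly $V(F)\setminus\{a\}$, so $|B_{a}|+|R_{a}|=n-1$ for every $a$. Hence each summand satisfies $B_{a}\ot R_{a}\in H^{|B_{a}|}\ot H^{|R_{a}|}\subseteq\sum_{p+q=n-1}H^{p}\ot H^{q}$, and since that space is closed under addition, so is $\col(F)$. (For $n=1$ this reads $F=\bullet_{x}$, $\col(F)=\etree\ot\etree\in H^{0}\ot H^{0}$, which matches $p+q=0$; when $F=\etree$ the convention $\col(\etree)=0$ makes the statement vacuous, consistent with the hypothesis $n\geq 1$.)

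For readers who prefer to remain with the recursive definition, the same conclusion follows by the standard double induction on $\dep(F)$ and then $\bre(F)$. In the base case $\dep(F)=0$, $F=\bullet_{x_{1}}\cdots\bullet_{x_{m}}$ with $n=m$, and Lemma~\mref{lem:rt11} expresses $\col(F)$ as a sum of terms $\bullet_{x_{1}}\cdots\bullet_{x_{i-1}}\ot\bullet_{x_{i+1}}\cdots\bullet_{x_{m}}$, each of bidegree $(i-1,m-i)$ with $(i-1)+(m-i)=n-1$. For the breadth-one step $F=B^{+}(\lbar{F})$ we have $|\lbar{F}|=n-1$; the term $\lbar{F}\ot\etree$ lies in $H^{n-1}\ot H^{0}$, and by the inductive hypothesis $\col(\lbar{F})\in\sum_{p+q=n-2}H^{p}\ot H^{q}$, so applying $\id\ot B^{+}$—which raises the number of vertices of the right tensor factor by exactly one—places $(\id\ot B^{+})\col(\lbar{F})$ in $\sum_{p+q=n-1}H^{p}\ot H^{q}$, using Eq.~(\mref{eq:dbp}). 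The breadth $\geq 2$ step uses Eq.~(\mref{eq:dele1}) (equivalently Lemma~\mref{lem:colff}) together with the fact that the dot actions of Eq.~(\mref{eq:dota}) merely insert a fixed forest into one tensor slot, hence add a fixed number of vertices there, so the bidegree bookkeeping goes through.

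The argument is pure bookkeeping and I expect no genuine obstacle; the only point that needs a moment's care is the constant shift by $1$—the coproduct removes exactly one vertex (the cut vertex $a$ in the combinatorial picture, or equivalently the root newly created by $B^{+}$)—and checking that the boundary conventions ($\bullet_{x_{1}}\bullet_{x_{0}}=\etree$ in Lemma~\mref{lem:rt11}, and $\col(\etree)=0$) are consistent with the claimed bidegree in the extreme case $n=1$.
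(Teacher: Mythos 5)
Your first paragraph is exactly the paper's proof: the paper derives the lemma in one line from the combinatorial description in Eq.~(\mref{eq:eqcombint}) together with the fact that $|B_a|+|R_a|=|F|-1$ for each $a\in V(F)$. The additional recursive induction you sketch is a correct but unnecessary alternative; the main argument is sound and matches the paper.
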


\begin{proof}
It follows from Eq.~(\mref{eq:eqcombint}) and the fact that $|B_a| + |R_a| = |F|-1$ for each $a\in V(F)$.
\end{proof}

\begin{lemma}
Let $(\hlf(\tx), \,\conc,\,\etree,\col)$ be the $\epsilon$-unitary bialgebra as in Theorem~\mref{thm:rt2}
and $$D_{\epsilon}:= \conc \col: \hlf(\tx) \to \hlf(\tx).$$
Then for each $n\geq 0$ and $F\in H^{n}$, $D_{\epsilon}^{\ast (n+1)}(F)=0$ and so $D_{\epsilon}$ is locally nilpotent.
\mlabel{lem:rt12}
\end{lemma}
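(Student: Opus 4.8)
The plan is to reduce the claim to the vanishing of a sufficiently high iterated coproduct, which is then forced purely by a grading argument. Write $\hlf(\tx)=\bigoplus_{n\ge 0}H^{n}$. Lemma~\mref{lem:rt122} says that $\col$ lowers the number of vertices by exactly one, $\col(H^{n})\subseteq\bigoplus_{p+q=n-1}H^{p}\ot H^{q}$ for $n\ge 1$; and since $H^{0}=\bfk\,\etree$ with $\col(\etree)=0$, the same inclusion holds for $n=0$ as well, under the convention that an empty direct sum is $0$.

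First I would make the iterated coproducts precise: set $\col^{(1)}:=\col$ and, for $k\ge 2$, $\col^{(k)}:=(\col\ot\id^{\ot(k-1)})\col^{(k-1)}\colon\hlf(\tx)\to\hlf(\tx)^{\ot(k+1)}$, noting that by the coassociativity proved in Theorem~\mref{thm:rt1} the map $\col^{(k)}$ is independent of the way the factors of $\col$ are bracketed. The key step is the assertion
\[
\col^{(k)}(H^{n})\subseteq\bigoplus_{p_{0}+\cdots+p_{k}=n-k}H^{p_{0}}\ot\cdots\ot H^{p_{k}}\qquad\text{for all }n\ge 0,\ k\ge 1,
\]
again with the convention that the right-hand side is $0$ when $n<k$. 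This I would prove by induction on $k$: the base case $k=1$ is exactly Lemma~\mref{lem:rt122} together with the degree-$0$ remark, and for the inductive step one applies $\col\ot\id^{\ot(k-1)}$ to each summand $H^{p_{0}}\ot\cdots\ot H^{p_{k-1}}$ of $\col^{(k-1)}(H^{n})$ and invokes the case $k=1$ on the first tensor factor, which replaces $H^{p_{0}}$ by $\bigoplus_{q+q'=p_{0}-1}H^{q}\ot H^{q'}$ and hence drops the total degree by one more, from $n-(k-1)$ to $n-k$.

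Granting this, the conclusion is immediate. By the definition of the convolution power, $D_{\epsilon}^{\ast(n+1)}(F)$ is computed by applying $\col^{(n+1)}$ to $F$, then $D_{\epsilon}$ to each of the resulting $n+2$ tensor factors, and finally multiplying the outputs. For $F\in H^{n}$ the displayed inclusion gives $\col^{(n+1)}(F)\in\bigoplus_{p_{0}+\cdots+p_{n+1}=-1}H^{p_{0}}\ot\cdots\ot H^{p_{n+1}}$; as there is no tuple of non-negative integers summing to $-1$, this forces $\col^{(n+1)}(F)=0$, hence $D_{\epsilon}^{\ast(n+1)}(F)=0$. Local nilpotence of $D_{\epsilon}$ then follows because each $D_{\epsilon}^{\ast m}$ is linear and an arbitrary $G\in\hlf(\tx)$ lies in $\bigoplus_{n\le N}H^{n}$ for some $N$, so the same degree count gives $D_{\epsilon}^{\ast(N+1)}(G)=0$.

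There is essentially no serious obstacle here; the only points needing a little care are the appeal to coassociativity (Theorem~\mref{thm:rt1}) to make $\col^{(k)}$ well defined independently of bracketing, and the treatment of degree-$0$ tensor factors, where Lemma~\mref{lem:rt122} is stated only for $n\ge 1$ and the identity $\col(\etree)=0$ exactly covers the remaining case. One could even obtain a sharper bound --- since $D_{\epsilon}$ itself lowers the vertex count by one, $D_{\epsilon}^{\ast m}$ vanishes on $H^{n}$ already for $m>(n-1)/2$ --- but the crude bound $m=n+1$ is all that is needed.
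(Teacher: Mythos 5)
Your proof is correct, but it takes a genuinely different route from the paper's. The paper argues by a double induction --- on the vertex number $n$ and, within that, on the breadth of $F$ --- unwinding the recursive definition of $\col$ through the three cases $F=\bullet_x$, $F=B^+(\lbar F)$ (via the $\epsilon$-cocycle condition) and $F=F_1F_2$ (via Lemma~\mref{lem:colff}), and checking in each case that some tensor factor of $\col(F)$ has strictly fewer vertices so that the induction hypothesis applies. You instead package Lemma~\mref{lem:rt122} into a statement about iterated coproducts, $\col^{(k)}(H^{n})\subseteq\bigoplus_{p_0+\cdots+p_k=n-k}H^{p_0}\ot\cdots\ot H^{p_k}$, proved by a one-line induction on $k$ using coassociativity (Theorem~\mref{thm:rt1}), and then conclude by the impossibility of non-negative integers summing to a negative number. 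This is cleaner and entirely avoids the breadth case analysis; what it costs is the (mild) bookkeeping needed to define $\col^{(k)}$ for a non-counital coalgebra and the care you correctly take with degree-$0$ factors, where Lemma~\mref{lem:rt122} is silent and $\col(\etree)=0$ must be invoked. One small point of convention: the paper's displayed definition of $f^{\ast n}$ uses $n+1$ factors of $f$ (so $D_{\epsilon}^{\ast(n+1)}$ uses $\col^{(n+1)}$, as you assume), whereas the computations in the paper's own proof treat $f^{\ast n}$ as the $n$-fold convolution power; your argument survives either reading, since under the standard convention $\col^{(n)}(F)$ lands in $\bfk\,(\etree\ot\cdots\ot\etree)$ and $D_{\epsilon}(\etree)=0$ finishes the job, but it is worth a sentence to fix the convention. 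Your closing remark with the sharper bound $m>(n-1)/2$ is also correct, though not needed.
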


\begin{proof}
We prove the result by induction on $n\geq 0$. For the initial step of $n=0$, it follows from Eq.~(\mref{eq:dele}) that
\begin{equation}
D_{\epsilon}(F)=D_{\epsilon}(1)=\mul\col(1)=0.
\mlabel{eq:eqdu1}
\end{equation}

Assume that $D_{\epsilon}^{\ast (n+1)}(F)=0$ holds for $F\in H^{n}$ with $n<k$, and consider the case when $n=k.$
Suppose first that $\bre(F) =1$. If $F=\bullet_x$ for some $x\in X$, then $F\in H^1$ and so
\begin{align*}
D_{\epsilon}^{\ast 2}(F)=\mul(D_{\epsilon}\ot D_{\epsilon})\col(\bullet_{x})=\mul(D_{\epsilon}\ot D_{\epsilon})(1\ot 1)=D_{\epsilon}(1)D_{\epsilon}(1)
=0.
\end{align*}
If $F\neq \bullet_x$ for all $x\in X$, then we can write $F=B^+(\lbar{F})$ for some $\lbar{F}\in H^{k-1}$.
Thus
\begin{align*}
D_{\epsilon}^{\ast (k+1)}(F)=&\ (D_{\epsilon}^{\ast k}\ast D_{\epsilon})(F)=\mul (D_{\epsilon}^{\ast k}\ot D_{\epsilon})\col(F)= \mul (D_{\epsilon}^{\ast k}\ot D_{\epsilon})\col\Big(B^+(\lbar{F})\Big)\\
=&\ \mul(D_{\epsilon}^{\ast k}\ot D_{\epsilon})\left(\lbar{F}\ot 1+(\id\ot B^+)\col(\lbar{F})\right) \quad(\text{by Eq.~(\mref{eq:dbp}})) \\
=&\ \mul\left(D_{\epsilon}^{\ast k}(\lbar{F})\ot D_{\epsilon}(1)\right)+\mul (D_{\epsilon}^{\ast k}\ot D_{\epsilon}B^+)\col(\lbar{F})\\
=&\ \mul (D_{\epsilon}^{\ast k}\ot D_{\epsilon}B^+)\col(\lbar{F})\quad(\text{by Eq.~(\mref{eq:eqdu1}}))\\
=&\ \mul (D_{\epsilon}^{\ast k}\ot D_{\epsilon}B^+)\Big(\sum_{(\lbar{F})}\lbar{F}_{(1)}\ot\lbar{F}_{(2)}\Big)\\
=&\ \mul\Big(\sum_{(\lbar{F})}D_{\epsilon}^{\ast k}(\lbar{F}_{(1)})\ot D_{\epsilon}B^+(\lbar{F}_{(2)})\Big)\\
=&\ 0,
\end{align*}
where the last step employs the induction hypothesis and the fact that $|\lbar{F}_{(1)}|< |\lbar{F}| = k-1$.
Suppose next that $\bre(F) \geq 2$. Then we may write $F=F_{1}F_{2}$ with $\bre(F_1), \bre(F_2) < \bre(F)$.
Hence
\begin{align*}
D_{\epsilon}^{\ast (k+1)}(F)=&\ (D_{\epsilon}^{\ast k}\ast D_{\epsilon})(F)= (D_{\epsilon}^{\ast k}\ast D_{\epsilon})(F_{1}F_{2})= \mul (D_{\epsilon}^{\ast k}\ot D_{\epsilon})\col(F_{1}F_{2}) \\
=&\ \mul (D_{\epsilon}^{\ast k}\ot D_{\epsilon})(F_{1}\cdot\col(F_{2})+\col(F_{1})\cdot F_{2})\quad(\text{by Lemma~\mref{lem:colff}})\\
=&\ \mul (D_{\epsilon}^{\ast k}\ot D_{\epsilon})\Big(\sum_{(F_{2})}F_{1}F_{2(1)}\ot F_{2(2)}+\sum_{(F_{1})}F_{1(1)}\ot F_{1(2)}F_{2} \Big)\\
=&\ \mul\Big(\sum_{(F_{2})}D_{\epsilon}^{\ast k}(F_{1}F_{2(1)})\ot D_{\epsilon}(F_{2(2)})+\sum_{(F_{1})}D_{\epsilon}^{\ast k}(F_{1(1)})\ot D_{\epsilon}(F_{1(2)}F_{2}) \Big).
\end{align*}
By Lemma~\mref{lem:rt122},
\begin{equation*}
|F_{1}F_{2(1)}|=|F_{1}|+|F_{2(1)}|<|F_{1}|+|F_{2}|=|F| =k,
\end{equation*}
whence $D_{\epsilon}^{\ast k}(F_{1}F_{2(1)})=0$ by the induction hypothesis.
Similarly,
$$|F_{1(1)}|<|F_1|<|F_1| + |F_2| = |F| = k$$ and so $D^{\ast k}_{\epsilon}(F_{1(1)})=0$ by the induction hypothesis. Hence
$ D_{\epsilon}^{\ast (k+1)}(F)=0.$
This completes the proof.
\end{proof}

The following result shows that $\hlf(\tx)$ has an $\epsilon$-unitary Hopf algebraic structure.

\begin{theorem}
The quadruple $(\hlf(\tx), \,\conc,\,\etree, \col)$  is an $\epsilon$-unitary Hopf algebra with bijective antipode $S=-\sum_{n=0}^{\infty}\frac{1}{n!}(-D_{\epsilon})^{n}$.
\mlabel{thm:rt13}
\end{theorem}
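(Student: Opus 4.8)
The plan is to obtain this as an immediate consequence of Aguiar's criterion recorded in Lemma~\mref{lem:rt3}, once its hypotheses are checked. First I would recall that, by Theorem~\mref{thm:rt2}, the quadruple $(\hlf(\tx),\conc,\etree,\col)$ is an $\epsilon$-unitary bialgebra; in particular, forgetting the unit, $(\hlf(\tx),\conc,\col)$ is an $\epsilon$-bialgebra over $\bfk$, and by the standing assumption of this section $\rchar(\bfk)=0$. Next I would invoke Lemma~\mref{lem:rt12}, which asserts precisely that $D_{\epsilon}=\conc\col$ is locally nilpotent. Thus hypothesis~(b) of Lemma~\mref{lem:rt3} is met, and that lemma yields that $\hlf(\tx)$ is an $\epsilon$-Hopf algebra with bijective antipode $S=-\sum_{n=0}^{\infty}\frac{1}{n!}(-D_{\epsilon})^{n}$. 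This series makes sense when evaluated on any $F\in H^{n}$, since local nilpotency gives $D_{\epsilon}^{\ast m}(F)=0$ for $m>n$, so only finitely many summands act nontrivially on each homogeneous component $H^{n}$.

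It then remains to package this as an $\epsilon$-unitary Hopf algebra in the sense of Definition~\mref{de:deha}, that is, to note that $\id\in\Hom_{\bfk}(\hlf(\tx),\hlf(\tx))$ is invertible for the circular convolution $\circledast$ with inverse $S$; but this is exactly how $S$ is produced in Lemma~\mref{lem:rt3}, so Eq.~(\mref{eq:eqha}) holds automatically. Combining the unitary associative algebra structure $(\hlf(\tx),\conc,\etree)$ of~\cite[Theorem~2.8]{ZGG}, the coalgebra structure of Theorem~\mref{thm:rt1}, the infinitesimal compatibility of Lemma~\mref{lem:colff}, and this antipode, one concludes that $(\hlf(\tx),\conc,\etree,\col)$ is an $\epsilon$-unitary Hopf algebra. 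As a consistency check I would also remark that $D_{\epsilon}(\etree)=\conc\col(\etree)=0$ by Eq.~(\mref{eq:dele}), so the series collapses to $-\id$ on $\etree$, giving $S(\etree)=-\etree$ in agreement with the remark after Definition~\mref{de:deha}.

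Since the genuinely substantive point---the local nilpotency of $D_{\epsilon}$---has already been settled in Lemma~\mref{lem:rt12}, I do not expect a real obstacle here. The only mild subtlety is to make sure that the ``$\epsilon$-Hopf algebra'' of Lemma~\mref{lem:rt3} (which knows nothing about a unit) and the ``$\epsilon$-unitary Hopf algebra'' of Definition~\mref{de:deha} coincide once a compatible unit is available, i.e.\ that $\circledast$-invertibility of $\id$ is literally the pair of identities in Eq.~(\mref{eq:eqha}); this is immediate from the definitions of $\ast$ and $\circledast$.
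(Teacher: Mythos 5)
Your proposal is correct and follows essentially the same route as the paper: cite Theorem~\mref{thm:rt2} for the $\epsilon$-unitary bialgebra structure, Lemma~\mref{lem:rt12} for the local nilpotency of $D_{\epsilon}$, and then apply Aguiar's criterion (Lemma~\mref{lem:rt3}) under the standing assumption $\mathrm{char}(\bfk)=0$, concluding via Definition~\mref{de:deha}. The extra remarks on $\circledast$-invertibility and on $S(\etree)=-\etree$ are consistent with the paper and harmless.
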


\begin{proof}
By Theorem~\mref{thm:rt2}, $(\hlf(\tx), \,\conc,\,\etree, \col)$ is an $\epsilon$-unitary bialgebra.
From Lemmas~\mref{lem:rt3}, \mref{lem:rt12} and our assumption that $\bfk$ being a field with ${\rm char}(\bfk) = 0$,
$(\hlf(\tx), \,\conc,\,\col)$ is an $\epsilon$-Hopf algebra with bijective antipode $S=-\sum_{n=0}^{\infty}\frac{1}{n!}(-D_{\epsilon})^{n}$.
So the result holds by Definition~\mref{de:deha}.
\end{proof}

The following lemma is needed.

\begin{lemma}\cite[Proposition~3.8]{Ag0}
Let $H$ and $L$ be $\epsilon$-Hopf algebras and $\phi: H\rightarrow L$ a morphism of $\epsilon$-bialgebras. Then
$\phi S_{H}=S_{L}\phi$, i.e. $\phi$ is a morphism of $\epsilon$-Hopf algebras.
\mlabel{lem:rt16}
\end{lemma}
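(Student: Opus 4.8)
The plan is to realize $\phi S_{H}$ and $S_{L}\phi$ as, respectively, a right and a left $\circledast$-inverse of $\phi$ inside one and the same monoid, and then to conclude by the uniqueness of inverses in a monoid. First I would record the following general fact: for any $\bfk$-algebra $A$ and any (not necessarily counital) coassociative $\bfk$-coalgebra $C$, the convolution $f\ast g:=m_{A}(f\ot g)\Delta_{C}$ turns $\Hom_{\bfk}(C,A)$ into an associative (possibly non-unital) $\bfk$-algebra, while the circular convolution $f\circledast g:=f\ast g+f+g$ is again associative and admits the zero map as a two-sided identity, because $f\ast 0=0=0\ast f$. Hence $\bigl(\Hom_{\bfk}(C,A),\circledast,0\bigr)$ is a monoid; applied with $C=H$ and $A=L$ this produces a monoid $M:=\bigl(\Hom_{\bfk}(H,L),\circledast,0\bigr)$ containing $\phi$, $\phi S_{H}$ and $S_{L}\phi$.

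Next I would check the two one-sided inverse identities in $M$. Using that $\phi$ is an algebra morphism,
\[
\phi\ast(\phi S_{H})=m_{L}(\phi\ot\phi)(\id_{H}\ot S_{H})\Delta_{H}=\phi\, m_{H}(\id_{H}\ot S_{H})\Delta_{H}=\phi\circ(\id_{H}\ast S_{H}),
\]
so by $\bfk$-linearity of $\phi$, $\phi\circ 0=0$, and the defining identity $\id_{H}\circledast S_{H}=0$ of the antipode (Definition~\mref{de:deha}),
\[
\phi\circledast(\phi S_{H})=\phi\circ(\id_{H}\ast S_{H})+\phi+\phi S_{H}=\phi\circ(\id_{H}\ast S_{H}+\id_{H}+S_{H})=\phi\circ(\id_{H}\circledast S_{H})=0 .
\]
Symmetrically, using that $\phi$ is a coalgebra morphism, $(S_{L}\phi)\ast\phi=m_{L}(S_{L}\ot\id_{L})(\phi\ot\phi)\Delta_{H}=(S_{L}\ast\id_{L})\circ\phi$, whence $(S_{L}\phi)\circledast\phi=(S_{L}\circledast\id_{L})\circ\phi=0$.

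Finally I would conclude: in the monoid $M$ the element $\phi$ has $\phi S_{H}$ as a right inverse and $S_{L}\phi$ as a left inverse, and these must agree,
\[
S_{L}\phi=(S_{L}\phi)\circledast 0=(S_{L}\phi)\circledast\bigl(\phi\circledast(\phi S_{H})\bigr)=\bigl((S_{L}\phi)\circledast\phi\bigr)\circledast(\phi S_{H})=0\circledast(\phi S_{H})=\phi S_{H},
\]
which is the assertion. I do not expect a genuine obstacle: the computation is routine once $M$ is set up. The only point requiring care — and the reason the classical Hopf-algebra argument survives the absence of counits — is that the convolution unit of $M$ is the zero map rather than $\eta\varepsilon$, so "inverse" must be read throughout with respect to $\circledast$, and one should verify carefully that $\ast$ restricts to a well-defined associative product on the single space $\Hom_{\bfk}(H,L)$ with the zero map annihilating it on either side.
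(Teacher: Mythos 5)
Your argument is correct and is essentially the standard proof of the cited result (Aguiar, Proposition~3.8), which the paper itself does not reprove: one realizes $\phi S_{H}$ as a right $\circledast$-inverse and $S_{L}\phi$ as a left $\circledast$-inverse of $\phi$ in the monoid $(\Hom_{\bfk}(H,L),\circledast,0)$, using that $\phi$ is multiplicative for the first identity and comultiplicative for the second, and concludes by uniqueness of inverses. All the auxiliary checks (associativity of $\circledast$, the zero map being a two-sided unit, and $\id\circledast S=0$ from Definition~\mref{de:deha}) are carried out correctly.
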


Now, we arrive at our main result of this subsection.

\begin{theorem}
Let $j_{X}: X\hookrightarrow \hlf(\tx), ~x \mapsto \bullet_x$ be the natural embedding. Then
the quintuple $(\hlf(\tx), \,\mul,\,\etree,\, \col,\,B^+)$ together with the $j_X$ is the free cocycle $\epsilon$-unitary Hopf algebra on the set $X$.
\mlabel{thm:rt16}
\end{theorem}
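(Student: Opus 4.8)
The plan is to bootstrap the universal property from the bialgebra case already proved in Theorem~\mref{thm:propm}, exploiting the fact (Lemma~\mref{lem:rt16}) that a morphism of $\epsilon$-bialgebras between two $\epsilon$-Hopf algebras automatically commutes with the antipodes; so the passage from bialgebras to Hopf algebras should cost essentially nothing.

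First I would check that the quintuple $(\hlf(\tx), \mul, \etree, \col, B^+)$ together with $j_X$ is itself a cocycle $\epsilon$-unitary Hopf algebra in the sense of Definition~\mref{defn:ciuha}. Indeed, by Theorem~\mref{thm:propm} it is a cocycle $\epsilon$-unitary bialgebra, and by Theorem~\mref{thm:rt13} its underlying $\epsilon$-unitary bialgebra $(\hlf(\tx), \conc, \etree, \col)$ is an $\epsilon$-unitary Hopf algebra, with bijective antipode $S = -\sum_{n\geq 0}\frac{1}{n!}(-D_\epsilon)^n$ where $D_\epsilon = \conc\,\col$. By Definition~\mref{defn:ciuha}(3) this is precisely a cocycle $\epsilon$-unitary Hopf algebra, and $j_X$ satisfies $\col(j_X(x)) = \col(\bullet_x) = \etree\ot\etree$ for $x\in X$.

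Then, for the universal property, I would take an arbitrary cocycle $\epsilon$-unitary Hopf algebra $(H, m, 1, \Delta, P)$ with antipode $S_H$ and a set map $f\colon X\to H$ with $\Delta(f(x)) = 1\ot 1$. Forgetting the antipode, $H$ is a cocycle $\epsilon$-unitary bialgebra, so Theorem~\mref{thm:propm} supplies a unique morphism $\free{f}\colon \hlf(\tx)\to H$ of operated $\epsilon$-unitary bialgebras with $\free{f}\, j_X = f$. To upgrade $\free{f}$ to an operated $\epsilon$-unitary Hopf algebra morphism it then remains only to verify $\free{f}\, S = S_H\, \free{f}$; but $\free{f}$ is in particular a morphism of $\epsilon$-bialgebras between the $\epsilon$-Hopf algebras $\hlf(\tx)$ and $H$, so Lemma~\mref{lem:rt16} delivers this identity for free. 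For uniqueness, any operated $\epsilon$-unitary Hopf algebra morphism $\phi\colon\hlf(\tx)\to H$ with $\phi\, j_X = f$ is, after discarding the antipode condition, an operated $\epsilon$-unitary bialgebra morphism with $\phi\, j_X = f$, hence $\phi = \free{f}$ by the uniqueness clause of Theorem~\mref{thm:propm}.

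I expect that there is no genuinely new obstacle here: all the real work was carried out in Theorem~\mref{thm:propm} (the free bialgebra property) and Theorem~\mref{thm:rt13} (existence of the antipode via local nilpotence of $D_\epsilon$), while Lemma~\mref{lem:rt16} is exactly what makes Hopf-level compatibility automatic. The only place that demands a little care is bookkeeping with the definitions, namely confirming that an "operated $\epsilon$-unitary Hopf algebra morphism" unwinds to precisely the four conditions (unitary algebra morphism, coalgebra morphism, commuting with $B^+$, commuting with the antipode) that are each now accounted for, after which one obtains as a special case (taking $X = \emptyset$) that the undecorated planar rooted forests form the free cocycle $\epsilon$-unitary Hopf algebra on the empty set, i.e.\ Corollary~\mref{coro:wdeh}.
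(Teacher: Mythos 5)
Your proposal is correct and follows essentially the same route as the paper: establish that $\hlf(\tx)$ is a cocycle $\epsilon$-unitary Hopf algebra via Theorems~\mref{thm:propm} and~\mref{thm:rt13}, obtain the unique operated $\epsilon$-unitary bialgebra morphism $\free{f}$ from Theorem~\mref{thm:propm}, and upgrade it to a Hopf algebra morphism via Lemma~\mref{lem:rt16}. Your explicit treatment of uniqueness (by forgetting the antipode condition and invoking the uniqueness clause of Theorem~\mref{thm:propm}) is a small but welcome addition that the paper leaves implicit.
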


\begin{proof}
The $(\hlf(\tx), \,\mul,\,\etree,\, \col)$ is an $\epsilon$-unitary Hopf algebra by Theorem~\mref{thm:rt13},
and further, together with the operator $B^+$, is a cocycle $\epsilon$-unitary Hopf algebra by Eq.~(\mref{eq:dbp}).

Let $(H,m,1_H,\Delta,P)$ be a cocycle $\epsilon$-unitary Hopf algebra, where the antipode is suppressed, and let $f: X\rightarrow H$ be a set map
such that $\Delta(f(x)) = 1_H\ot 1_H$ for $x\in X$.
By Theorem ~\mref{thm:propm}, there is a unique morphism $\free{f}:\hlf(\tx)\rightarrow H$ of operated $\epsilon$-unitary bialgebras.
In particular, $\free{f}$ is a morphism of $\epsilon$-bialgebras. By Lemma~\mref{lem:rt16}, $\free{f}$ is compatible with the antipodes and so
is a morphism of operated $\epsilon$-unitary Hopf algebras. This proves the desired universal property.
\end{proof}

Let $X = \emptyset$ be the empty set. Then $\tx = X \sqcup\{\sigmaup\} = \{\sigmaup\}$ is a singleton set.
In this case, decorated planar rooted forests $\ldf(\tx)$ have the same decoration $\sigmaup$.
Equivalently, forests in $\ldf(\tx)$ have no decorations
and $\ldf(\tx)$ is precisely $\calf$. So we obtain a cocycle $\epsilon$-unitary Hopf algebraic structure on planar rooted forests,
which are the object studied in the Foissy-Holtkamp Hopf algebra~\mcite{Fo1, Hol}.

\begin{coro}
The quintuple $(\bfk \calf,\,\mul,\,1,\col,\,B^{+})$ is the free cocycle  $\epsilon$-unitary Hopf algebra on the empty set,
that is, the initial object in the category of cocycle $\epsilon$-unitary Hopf algebras.\mlabel{coro:wdeh}
\mlabel{coro:wdeh}
\end{coro}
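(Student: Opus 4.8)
The plan is to obtain this statement as a direct specialization of Theorem~\ref{thm:rt16} to the empty set. First I would observe that when $X=\emptyset$, the alphabet $\tx=X\sqcup\{\sigmaup\}$ collapses to the singleton $\{\sigmaup\}$, so that every vertex of every tree in $\calt_\ell(\tx)$, whether a leaf or an internal vertex, carries the unique admissible decoration $\sigmaup$. A decoration then conveys no information, and erasing it yields a bijection between $\ldf(\tx)$ and the set $\calf=M(\calt)$ of undecorated planar rooted forests; under this bijection the free module $\hlf(\tx)=\bfk\ldf(\tx)$ is identified with $\bfk\calf$, while the concatenation $\mul$, the unit $1$, the grafting operator $B^+$, and the coproduct $\col$ built recursively in Section~\ref{sec:CKHOPHAL} transport verbatim. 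The only point needing a word of care is that the recursive clauses Eqs.~(\ref{eq:dele}), (\ref{eq:dbp}) and~(\ref{eq:dele1}) defining $\col$ on $\hlf(\{\sigmaup\})$ indeed restrict to the intended operations on $\bfk\calf$, which is immediate since those clauses never make use of the decoration.

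Next I would invoke Theorem~\ref{thm:rt16}: the quintuple $(\hlf(\tx),\mul,1,\col,B^+)$ together with the embedding $j_X: X\hookrightarrow\hlf(\tx)$ is the free cocycle $\epsilon$-unitary Hopf algebra on $X$. Setting $X=\emptyset$ and applying the identification of the previous paragraph, this says precisely that $(\bfk\calf,\mul,1,\col,B^+)$ is the free cocycle $\epsilon$-unitary Hopf algebra on $\emptyset$.

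It remains to unpack what ``free on $\emptyset$'' means. The structure map $j_\emptyset$ is the unique empty map, and for any cocycle $\epsilon$-unitary Hopf algebra $(H,m,1,\Delta,P)$ the only set map $f:\emptyset\to H$ is again the empty map, for which the hypothesis that $\Delta(f(x))=1\ot 1$ for all $x\in X$ holds vacuously and the compatibility $\free{f}\,j_\emptyset=f$ is automatic. Hence the universal property provided by Theorem~\ref{thm:rt16} asserts exactly that for every cocycle $\epsilon$-unitary Hopf algebra $H$ there is a unique morphism $\bfk\calf\to H$ of operated $\epsilon$-unitary Hopf algebras, that is, $(\bfk\calf,\mul,1,\col,B^+)$ is the initial object in the category of cocycle $\epsilon$-unitary Hopf algebras. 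Since the whole argument is pure specialization, I do not expect any genuine obstacle beyond the bookkeeping identification $\ldf(\{\sigmaup\})=\calf$.
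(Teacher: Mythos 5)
Your proposal is correct and matches the paper's proof, which likewise obtains the corollary by setting $X=\emptyset$ in Theorem~\mref{thm:rt16} after noting that $\tx=\{\sigmaup\}$ forces all decorations to coincide, so $\ldf(\tx)$ is identified with $\calf$. Your additional unpacking of the universal property over the empty set is a harmless elaboration of the same argument.
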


\begin{proof}
It follows from Theorem~\mref{thm:rt16} by taking $X=\emptyset$.
\end{proof}

\bigskip

\noindent {\bf Acknowledgments}: This work was supported by the National Natural Science Foundation of
China (No.~11771191), the Fundamental Research Funds for the Central
Universities (No.~lzujbky-2017-162) and the Natural Science Foundation of Gansu Province (No.~17JR5RA175).

We thank the anonymous referee for valuable suggestions helping to improve the paper.

\medskip

\end{document}